\newcommand{\obj}{f}
\newcommand{\feasibleSet}{\mathcal{X}}
\newcommand{\xvec}{\ensuremath{x}}
\newcommand{\val}[1]{\nu_{#1}}
\newcommand{\len}[1]{l_{#1}}
\newcommand{\valvec}{\val{}}
\newcommand{\lenvec}{\len{}}
\newcommand{\numvars}{\ensuremath{n}}
\newcommand{\set}[1]{\left\{#1\right\}}
\newcommand{\dd}{\mathcal{D}}
\newcommand{\N}{\mathcal{N}}
\newcommand{\A}{\mathcal{A}}
\newcommand{\rootnode}{\mathbf{r}}
\newcommand{\terminalnode}{\mathbf{t}}
\newcommand{\head}[1]{h_{#1}}
\newcommand{\tail}[1]{t_{#1}}
\newcommand{\layer}[1]{\tau_{#1}}
\newcommand{\inarcs}[1]{\Gamma^-({#1})}
\newcommand{\outarcs}[1]{\Gamma^+({#1})}
\newcommand{\yvec}{\ensuremath{y}}
\newcommand{\avec}{a}
\newcommand{\Gmat}{\ensuremath{G}}
\newcommand{\gvec}{G}
\newcommand{\hvec}{d}
\newcommand{\state}[1]{\ensuremath{s}_{#1}}
\newcommand{\svec}{\ensuremath{s}}
\newcommand{\Vfun}{\ensuremath{V}}
\newcommand{\statespace}[1]{\mathcal{S}_{#1}}
\newcommand{\cost}{\ensuremath{c}}
\newcommand{\Amat}{\ensuremath{A}}
\newcommand{\Bmat}{\ensuremath{B}}
\newcommand{\bvec}{\ensuremath{b}}
\newcommand{\Nmat}{\ensuremath{N}}
\newcommand{\feasKKT}{\mathcal{Y}}
\newcommand{\pivec}{\pi}
\newcommand{\gammavec}{\gamma}
\newcommand{\projs}{\mathscr{P}}
\newcommand{\uvec}{\delta}
\newcommand{\Uset}{\Delta}
\newcommand{\sep}{\mathscr{S}}
\newcommand{\wvec}{\mathbf{w}}
\newcommand{\evec}{\mathbf{e}}
\newcommand{\vvec}{\mathbf{v}}
\newcommand{\E}{\mathcal{E}}
\newcommand{\U}{\mathcal{U}}
\newcommand{\V}{\mathcal{V}}
\newcommand{\sink}{\mathbf{t}}
\newcommand{\del}{\boldsymbol\delta}
\tikzstyle{zero arc} = [draw,dashed, line width=0.5pt,->]
\tikzstyle{one arc} = [draw,line width=0.5pt,->]
\tikzstyle{zero arc infeasible} = [zero arc,
\tikzstyle{one arc infeasible} = [one arc,
\begin{document}

\RUNAUTHOR{Lozano, Bergman, and Cire}

\RUNTITLE{Constrained Shortest-Path Reformulations via Decision Diagrams}

\TITLE{Constrained Shortest-Path Reformulations via Decision Diagrams for Structured Two-stage Optimization Problems}

\ARTICLEAUTHORS{%

\AUTHOR{Leonardo Lozano}
\AFF{Operations, Business Analytics \& Information Systems, University of Cincinnati, 2925 Campus Green Drive,\\Cincinnati, OH 45221\\ 
	\EMAIL{leolozano@uc.edu} \URL{}}
	
\AUTHOR{David Bergman}
\AFF{Department of Operations and Information Management, University of Connecticut, 2100 Hillside Rd, Storrs, CT 06268\\ \EMAIL{david.bergman@uconn.edu}
}

\AUTHOR{Andre A. Cire}
\AFF{Dept. of Management, University of Toronto Scarborough and Rotman School of Management \\ Toronto, Ontario M1C 1A4, Canada
\\ \EMAIL{andre.cire@rotman.utoronto.ca} 
    }

} 

\ABSTRACT{%
Many discrete optimization problems are amenable to constrained shortest-path reformulations in an extended network space, a technique that has been key in convexification, bound strengthening, and search. In this paper, we propose a constrained variant of these models for two challenging classes of discrete two-stage optimization problems, where traditional methods (e.g., dualize-and-combine) are not applicable compared to their continuous counterparts. Specifically, we propose a framework that models problems as decision diagrams and introduces side constraints either as linear inequalities in the underlying polyhedral representation, or as state variables in shortest-path dynamic programming models. For our first structured class, we investigate two-stage problems with interdiction constraints. We show that such constraints can be formulated as indicator functions in the arcs of the diagram, providing an alternative single-level reformulation of the problem via a network-flow representation. Our second structured class is classical robust optimization, where we leverage the decision diagram network to iteratively identify label variables, akin to an L-shaped method. We evaluate these strategies on a competitive project selection problem and the robust traveling salesperson with time windows, observing considerable improvements in computational efficiency as compared to general methods in the respective areas. 
}

\KEYWORDS{Integer Programming, Benders Decomposition, Dynamic Programming}

\maketitle


\section{Introduction}

Given a graph $G$ equipped with arc lengths, the constrained shortest-path problem (CSP) asks for the shortest path between two vertices of $G$ that satisfies one or more side constraints, such as general arc-based budget restrictions or limits on the number of nodes traversed. CSPs are fundamental models in the operations literature, with direct applications both in practical transportation problems \citep{festa2015constrained}, as well as in a large array of solution methodologies for routing and multiobjective optimization \citep{Irnich2005}. Due to its importance, the study of scalable algorithms for CSPs and related problems is an active and large area of research (e.g., \citealt{cabrera2020}, \citealt{vera2021computing}, \citealt{kergosien2022efficient}). 

In this paper, we expand upon this concept and propose using CSPs as a modelling construct for structured classes of optimization problems. Our approach consists of excluding either challenging constraints or variables of the original model, and reformulating the simplified system as a shortest-path problem in an extended network space, where paths have a one-to-one mapping with potential solutions. Solving the original problem then reduces to finding a \textit{constrained} shortest path, where side constraints incorporate information of the missing constraints or variables that were originally excluded. In particular, we exploit network reformulations based on decision diagrams \citep{bergman2016}, extracted from the state-transition graph of recursive models for combinatorial optimization problems.

The distinct attribute of this methodology is that one can leverage the underlying network to derive new reformulations and more efficient algorithms. More precisely, we exploit the dual description of a constrained shortest-path problem as either a network-flow model (i.e., its \textit{polyhedral} perspective), or as a label-setting search (i.e., its \textit{dynamic programming} perspective), applying each based on the complexity of formulating the side constraints within the decision diagram. 

In particular, we develop this approach on two classes of challenging discrete optimization problems. The first refers to binary two-stage programs of the form
\begin{align*}    
    \max_{\xvec^L, \xvec^F \in \{0,1\}^n} 
        \bigg \{ 
            c_1^L \xvec^L + c_2^L \xvec^F 
                \colon 
                \Amat^L \xvec^L + \Bmat^L \xvec^F \le \bvec^L, 
                \xvec^F \in \argmax_{ \overline{\xvec}^F \in \{0,1\}^n } 
                \left\{ 
                    c^F \overline{x}^F \colon \Amat^F \, \overline{\xvec}^F \le \bvec^F,  \overline{\xvec}^F \le \mathbf{1} - \xvec^L
                \right\}
        \bigg \},
\end{align*}
where variables $x^F$ and $x^L$ formulate a leader's and a follower's decisions, respectively. The model above is prevalent in network interdiction problems \citep{MortonEtal07, cappanera2011, hemmati2014, lozano2017b}, minimum edge and vertex blocker problems \citep{bazgan2011, Mahdavi2014}, and other classes of adversarial settings \citep{costa2011, Caprara2016, zare2018}. Note that the above is not a zero-sum game since the leader and follower objectives are not necessarily opposite, resulting in a more general class of problems than traditional network interdiction. Our challenge stems from the combinatorial structure associated with the follower's variables, often preventing direct extensions of duality-based solution methodologies from continuous two-stage settings.

We show that, alternatively, the follower's subproblem can be formulated as a CSP parameterized by the leader's variables $x^L$. In particular, the coefficient matrix of the CSP has a totally unimodular structure, and typical polyhedral approaches (i.e., dualize-and-combine) can be applied to reformulate the follower's subproblem as a feasibility system. Further, we also exploit the property of the duals of the follower's CSP reformulation to provide a novel convexification of the typical nonlinear inequalities from the dualize-and-combine technique. The final model is an extended, single-level mixed-integer linear programming (MILP) reformulation of the original two-stage problem, and thus amenable to commercial state-of-the-art solvers. 

The second problem class we investigate is classical robust optimization, 
\begin{align*}
    \min_{\xvec \in \{0,1\}}  \left\{ cx \colon \Amat(\delta) \, \xvec \le \bvec(\delta), \, \forall \delta \in \Uset \right \}.
\end{align*}
where $\Uset$ is a uncertainty set that parameterizes the realizations of the coefficient matrix $\Amat(\delta)$ and right-hand side $\bvec(\delta)$. Existing approaches are primarily based on cutting-plane algorithms akin to Benders decomposition \citep{ mutapcic2009oracles,Zeng13,ben2015oracle,ho2018oracles,BorreroLozano2020}. Each step in such procedures solves an MILP of increasing size, which may often inhibit the scalability of the approach.

We demonstrate that the robust problem can be reformulated as a CSP where side constraints are parameterized by elements $\delta$ of the uncertainty set $\Uset$. The resulting model, however, can be (potentially infinitely) large. We propose a methodology that starts with a smaller subset $\Uset' \subset \Uset$ and encode the resulting CSP problem as a dynamic program, thereby amenable to scalable combinatorial CSP labeling algorithms \citep{cabrera2020,vera2021computing}. Violated constraints are then identified through a separation oracle to augment $\Uset'$, and the procedure is repeated. Such a procedure is akin to a cutting-plane method, but where labels in the CSP play the role of Benders cuts in the proposed state-augmenting procedure. 

We evaluate our methodologies numerically on case studies in competitive project selection and the robust traveling salesperson problem, respectively. For the first case, we compare the proposed single-level reformulation with respect to a state-of-the-art generic bilevel solver based on branch-and-cut \citep{fischetti2017new}. Results suggest that, even for large networks and using default solver settings, our methodology provides considerable solution time improvements with respect to the branch-and-cut solver. For the robust case, we compare our method with respect to an MILP-based cutting plane method. We observed similar compelling improvement, where gains were primarily obtained when solving the CSP via an existing labeling method \citep{lozano2013} as opposed to a linear formulation with integer programming. 

\smallskip \textit{Contributions.} Our primary contribution is the development of a CSP-based reduction methodology. We show that one can either exploit network flow models or dynamic programming to obtain, respectively, extended mixed-integer linear formulations or algorithms that are built on solvers that are constantly evolving. The techniques are applied to two structured classes of challenging discrete optimization problems, which in our experiments have suggested runtime improvements whenever compact networks for the underlying combinatorial problem sizes are available. We also discuss generalizations of the framework for other classes of two-stage problems.

\smallskip \textit{Paper organization.} The paper is organized as follows. Section \ref{sec:related} presents a literature review on related algorithms. In Section \ref{sec:preliminaries}, we briefly review reformulations of discrete optimization problems as network models, more specifically decision diagrams. Section \ref{sec:CSPModels} formalizes the description of our proposed constrained path formulations over decision diagrams. Section \ref{sec:bilevel} implements the approach for a class of discrete two-stage programs and depicts an application on competitive project selection. Section \ref{sec:robustoptimization} considers the approach for classical robust optimization with an application to the robust traveling salesperson problem with time windows. Finally, Section \ref{sec:conclusion} concludes the manuscript and discusses future directions. Proofs are included in the online supplement.

\section{Related Work}
\label{sec:related}

Constrained shortest-path problems (CSP) define an extensive literature in both operations and the computer science literature. We refer to the survey by \cite{festa2015constrained} for examples of techniques and applications. In particular, the closest variant to our work refers to the shortest path with resource constraints (SPPRC), first proposed by \cite{universite1986fabrication} and widely investigated as a pricing model in column-generation approaches \citep{Irnich2005}. The classic SPPRC views constraints as limited ``resources'' that accumulate linearly over arcs as a path is traversed. Our work considers equivalent resource constraints over graph-based reformulations of general discrete optimization, specifically via decision diagrams in our context.

We consider two CSP formulations as the basis of our approaches. The first augments a classical reformulation of an integer program as a shortest-path problem in a given graph. Such reformulations dates back to the seminal work by \cite{gilmore1966theory} on the representation of knapsack problems via value functions, which provide polyhedral subadditive duals to integer programs. The dual of these systems are weighted shortest-path problems; see, e.g., \cite{wolsey1999integer}, II.4. This perspective is analogous to network extended formulations based on dynamic programming (e.g., \citealt{eppen1987solving, conforti2010extended, de2022arc}). A decision diagram analogously provides such shortest-path reformulations, with additional reduction techniques to compress the network size by exploiting symmetry that is not captured by the value function state \citep{hooker2013decision}. Other works modeling integer programs as decision diagrams investigate cut-generation procedures \citep{davarnia2020outer, castro2020combinatorial}, discrete relaxations \citep{van2022}, convexification of nonlinear constraints \citep{bergman2018discrete, bergman2021decision}, and strenghtening of big-M constraints in routing problems \citep{cire2019network, castro2020mdd}. In this work, we modify this reformulation by incorporating side constraints to the shortest-path problem, specifically to address the difficult structure of the studied two-stage problems. Specifically, our approach considers a dualize-and-combine technique that uses complementary slackness to reformulate an optimization problem into a feasibility one (see, e.g., survey by \citealt{smith2020survey}). However, we leverage the polyhedral subadditive dual given by a decision diagram to obtain an alternative convexification and single-stage reformulations to these problems.

The second CSP reformulation we consider is based directly on dynamic programming (DP), which have been used extensively in state-of-the-art techniques for large-scale CSPs (e.g., \citealt{dumitrescu2003improved,cabrera2020,vera2021computing}). In general, the DP in this context solves a shortest-path problem on an extended graph via a specialized recursion that \textit{labels} the states achievable at a node. The benefit of such models is that they do not require linearity and exploit the combinatorial structure of the graph for efficiency. We expand on such models in \S\ref{sec:CSPModels} and refer to \cite{Irnich2005} for general survey of labeling algorithms for CSPs and their effectiveness.

One of our problem settings can be seen as a special class of discrete two-stage optimization problems with so-called interdiction constraints. This defines an active research field with applications in energy and natural gas market regulation \citep{DempeEtal11,kalashnikov10}, waste management \citep{Xu12b}, bioengineering \citep{Burgard03}, and traffic systems \citep{Brotcorne01,Dempe12}, to name a few. Solution approaches include branch and bound \citep{DeNegreRalphs09,XuWang14}, Benders decomposition \citep{bolusani2022framework}, parametric programming \citep{Dominguez2010}, column-and-rown generation \citep{baggio2021}, and cutting-plane approaches based on an optimal value-function reformulations \citep{Mitsos10, Lozano2017}. The state-of-the-art general approach for such problems is a branch-and-cut algorithm for mixed-integer linear bilevel programs \citep{fischetti2017new}, which we consider as a benchmark in our computational runs. 




Robust optimization is also pervasive, with extensive literature in both theory \citep{bertsimas2004robust,ben2006extending,bertsimas2009constructing,li2011comparative,bertsimas2016reformulation} and applications~\citep{lin2004new,ben2005retailer,bertsimas2006robust,yao2009evacuation,ben2011robust,gregory2011robust,moon2011robust,gorissen2015practical,xiong2017distributionally}. For settings where variables are discrete, state-of-the-art algorithms first relax the model by considering only a subset of realizations, iteratively adding violated variables and constraints from missing realizations until convergence. This primarily includes cutting plane algorithms based on Benders decomposition \citep{ mutapcic2009oracles,Zeng13,ben2015oracle,ho2018oracles,BorreroLozano2020}. Such methodologies exploit, e.g., the structure of the uncertainty set to quickly identify violated constraints via a ``pessimization'' oracle that finds the worst-case realization of the uncertainty set.

In the robust setting, our methodology can alternatively be cast as a type of cutting-plane approach that augments an uncertainty set initially composed of a small number of realizations. This approach incorporates the scenario-specific constraints as resources in a full CSP reformulation of the robust problem based on decision diagrams, which we later remodel as a infinite-dimensional dynamic program.  We address such a program by adding missing states iteratively, where the new state captures one or more violated constraints of the original model. Existing state-augmenting algorithms have been applied by \cite{boland2006accelerated} in labeling methods to solve the standard CSP, as well as a in dynamic programs for stochastic inventory management \citep{rossi2011state}. 

\section{Preliminaries}
\label{sec:preliminaries}

We introduce in this section the concept of decision-diagram (DD) reformulations that we leverage throughout this work. We start in \S\ref{sec:networkDDs} by introducing the network structure and notation. Next, we briefly discuss in \S\ref{sec:DDcompilation} existing construction strategies and problems amenable to this model. 

\subsection{Network Reformulations via Decision Diagrams}
\label{sec:networkDDs}

In this context, a decision diagram $\dd$ is a network reformulation of the problem
\begin{align}
    \tag{DO} \label{model:discretep} 
    \min_{\xvec} \left\{ f(\xvec) \colon \xvec \in \feasibleSet \right\},
\end{align}
where $\feasibleSet \subseteq \mathbb{Z}^n$ is an $n$-dimensional finite set for $n \ge 1$. Specifically, $\dd = (\N,\A,\valvec,\lenvec)$ is an acyclic graph with node set $\N$ and arc set $\A \subseteq \N \times \N$. The set $\N$ is partitioned by $n+1$ layers $\N_1, \dots, \N_{n+1}$, where $\N_1 = \{\rootnode\}$ for a root node $\rootnode$ and $\N_{n+1} = \{ \terminalnode \}$ for a terminal node $\terminalnode$. Each arc $a \in \A$ is equipped with a value assignment $\val{a} \in \mathbb{Z}$ and a length $\len{a} \in \mathbb{R}$. Further, arcs only connect nodes in adjacent layers, i.e., with each $a \in \A$ we associate a tail node $\tail{a} \in \N_j$ and a head node $\head{a} \in \N_{j+1}$, $j \in \{1,\dots,n\}$. We denote by 
$\layer{a} \in \{1,\dots,n\}$ the layer that includes the tail node of $a$, i.e., $\tail{a} \in \N_{\layer{a}}$.

The decision diagram $\dd$ models \ref{model:discretep} through its $\rootnode-\terminalnode$ paths and path lengths.
Given an arc-specified $\rootnode-\terminalnode$ path $(a_1, \dots, a_n)$, where $\head{a_i} = \tail{a_{j+1}}$ for $j=1,\dots,n-1$, the solution $\xvec = (\val{a_1}, \dots, \val{a_n})$ composed of the ordered arc-value assignments is such that $\xvec \in \feasibleSet$ and the path length satisfies $\sum_{j=1}^n \len{a_j} = \obj(\xvec)$. Conversely, every solution $\xvec \in \feasibleSet$ maps to a corresponding $\rootnode-\terminalnode$ where its length matches the objective evaluation of $\xvec$. 

Thus, by construction, the shortest path in $\dd$ yields an optimal solution to \ref{model:discretep}. We also remark that if \ref{model:discretep} were a maximization problem, the longest $\rootnode-\terminalnode$ path provides instead the optimal solution, which is also computable efficiently in the size of $\dd$ since the network is acyclic.

\medskip
\begin{example}
    \label{ex:dd}
    Figure \ref{fig:knap_dd}(a) depicts a reduced decision diagram for the knapsack problem 
    $\max_{\xvec \in \feasibleSet} \left\{ 4x_1 + 3x_2 + 7x_3 + 8x_4 \colon \xvec \in \feasibleSet \right\}$   
    with  $\feasibleSet = \{ \xvec \in \{0,1\}^4 \colon 7 x_1 + 5 x_2 + 4 x_3 + x_4 \leq 8 \}$ (from \citealt{castro2020combinatorial}). Since variables are binaries, the value assignment of an arc $a \in \A$ is either $\val{a} = 0$ (dashed lines) or $\val{a} = 1$ (solid lines). An arc $a$ emanating from the $j$-th layer, i.e., $\tail{a} \in \N_j$, corresponds to an assignment $x_j = \val{a}$.  Further, the length of $a$ is either $0$ if $\val{a} = 0$ or the coefficient of $x_j$ in the objective otherwise. In particular, the longest path (in blue) provides the optimal solution $\xvec = (0,0,1,1)$ with objective $15$. \hfill $\square$

    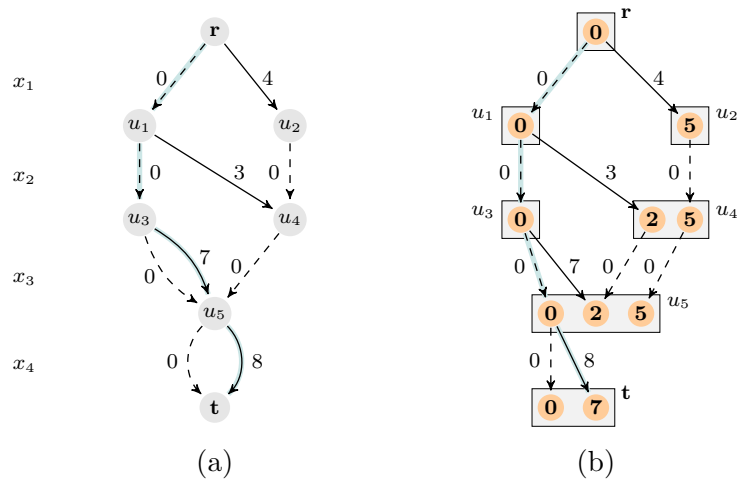
\begin{figure}[h]
        \begin{center}
            \tikzstyle{main node} = [circle,fill=gray!50,font=\scriptsize, inner sep=1pt]            
            \tikzstyle{text node} = [font=\scriptsize]
            \tikzstyle{arc text} = [font=\scriptsize]

            \tikzstyle{optimal arc} = [-,line width=2pt, color=teal!20!white]
            \tikzstyle{blocked arc} = [-,line width=2pt, color=red]

            \begin{tikzpicture}[->,>=stealth',shorten >=1pt,auto,node distance=1cm,
                thick]        
                \node[text node] (r) at (0,0) {$\quad$};                
                \node[text node] (l1) at (0,-0.5) {$x_1$};
                \node[text node] (l2) at (0,-1.75) {$x_2$};
                \node[text node] (l4) at (0,-3.1) {$x_3$};
                \node[text node] (l5) at (0,-4.25) {$x_4$};
                \node[text node] (t) at (0,-4.1) {$\quad$};
                \node (type) at (0,-5.75) {$\;$};                
            \end{tikzpicture}
            \hspace{2em}
            \begin{tikzpicture}[->,>=stealth',shorten >=1pt,auto,node distance=1cm,
                thick]        
                \tikzstyle{node label} = [circle,fill=gray!20,font=\scriptsize, inner sep=1pt]

                \node[node label] (r) at (0,0) {$\;\rootnode\;$};
                \node[node label] (u1)  at (-1,-1.25)  {$u_1$};
                \node[node label] (u2)  at (1,-1.25)  {$u_2$};
                \node[node label] (u3)  at (-1,-2.5) {$u_3$};
                \node[node label] (u4)  at (1,-2.5) {$u_4$};
                \node[node label] (u5)  at (0,-3.75) {$u_5$};
                \node[node label] (t) at (0,-5) {$\;\terminalnode\;$};
                \node (type) at (0,-5.75) {(a)};
                
                \path[every node/.style={font=\sffamily\small}]
                (r) 
                    edge[optimal arc] (u1)
                    edge[zero arc] node [left, arc text] {$0$} (u1)
                    edge[one arc] node [right, arc text] {$4$} (u2)
                (u1) 
                    edge[optimal arc] (u3)
                    edge[zero arc] node [right, arc text] {$0$} (u3)
                    edge[one arc] node [right, arc text] {$\;\;3$} (u4)
                (u2)
                    edge[zero arc] node [left, arc text] {$0$} (u4)
                (u3)
                    edge[zero arc, bend right=20] node [left, arc text] {$0$} (u5)

                    edge[optimal arc, bend left=20] (u5)
                    edge[one arc, bend left=20] node [right, arc text] {$7$} (u5)
                (u4)
                    edge[zero arc] node [left, arc text] {$0$} (u5)
                (u5)
                    edge[zero arc, bend right=45] node [left, arc text] {$0$} (t)
                    
                    edge[optimal arc, bend left=45] (t)
                    edge[one arc, bend left=45] node [right, arc text] {$8$} (t);                            
            \end{tikzpicture}        
            \quad\quad\quad\quad\quad
            \begin{tikzpicture}[->,>=stealth',shorten >=1pt,auto,node distance=1cm,
                thick]        
                \tikzstyle{main node} = [circle,fill=orange!40,font=\scriptsize, inner sep=1pt]
                
                \draw [fill=gray!10, ultra thin] (-0.25,-0.25) rectangle (0.25, 0.25);
                    \node[font=\scriptsize] at (0.4, 0.25) {$\rootnode$};                
                \draw [fill=gray!10, ultra thin] (-1.25,-1.50) rectangle (-0.75, -1);
                    \node[font=\scriptsize] at (-1.5, -1.1) {$u_1$};                
                \draw [fill=gray!10, ultra thin] (1,-1.50) rectangle (1.5, -1);
                    \node[font=\scriptsize] at (1.75, -1.1) {$u_2$};
                \draw [fill=gray!10, ultra thin] (-1.25,-2.25) rectangle (-0.75, -2.75);
                    \node[font=\scriptsize] at (-1.5, -2.4) {$u_3$};
                \draw [fill=gray!10, ultra thin] (0.5,-2.25) rectangle (1.50, -2.75);
                    \node[font=\scriptsize] at (1.75, -2.4) {$u_4$};
                \draw [fill=gray!10, ultra thin] (-0.85,-4) rectangle (0.85, -3.5);
                    \node[font=\scriptsize] at (1.1, -3.6) {$u_5$};
                \draw [fill=gray!10, ultra thin] (-0.85,-4.75) rectangle (0.25, -5.25);
                    \node[font=\scriptsize] at (0.4, -4.8) {$\terminalnode$};

                \node[main node] (r) at (0,0) {\textbf{0}};
                \node[main node] (u2a)  at (-1,-1.25)  {\textbf{0}};
                \node[main node] (u2b)  at (1.25,-1.25)  {\textbf{5}};
                \node[main node] (u3a)  at (-1,-2.5) {\textbf{0}};                
                \node[main node] (u4a)  at (0.75,-2.5) {\textbf{2}};
                \node[main node] (u4b)  at (1.25,-2.5) {\textbf{5}};
                \node[main node] (u5a)  at (-0.6,-3.75) {\textbf{0}};
                \node[main node] (u5b)  at (0,-3.75) {\textbf{2}};
                \node[main node] (u5c)  at (0.6,-3.75) {\textbf{5}};
                \node[main node] (t1)  at (-0.6,-5) {\textbf{0}};
                \node[main node] (t2)  at (0,-5) {\textbf{7}};

                \node (type) at (0,-5.75) {(b)};
                                
                \path[every node/.style={font=\sffamily\small}]
                (r) 
                    edge[optimal arc] (u2a)
                    edge[zero arc] node [left, arc text] {$0$} (u2a)
                    edge[one arc] node [right, arc text] {$4$} (u2b)
                (u2a)
                    edge[optimal arc] (u3a)                    
                    edge[zero arc] node [left, arc text] {$0$} (u3a)
                    edge[one arc] node [right, arc text] {$\;\;3$} (u4a)
                (u2b)                    
                    edge[zero arc] node [left, arc text] {$0$} (u4b)
                (u3a)
                    edge[optimal arc] (u5a)
                    edge[zero arc] node [left, arc text] {$0$} (u5a)
                    edge[one arc] node [right, arc text] {$7$} (u5b)
                (u4a)
                    edge[zero arc] node [left, arc text] {$0$} (u5b)
                (u4b)
                    edge[zero arc] node [left, arc text] {$0$} (u5c)
                (u5a)
                    edge[optimal arc] (t2)
                    edge[zero arc] node [left, arc text] {$0$} (t1)
                    edge[one arc] node [right, arc text] {$8$} (t2)
                ;
            \end{tikzpicture}        
        \end{center}
        \caption{(a) A decision diagram for the knapsack instance in Example \ref{ex:dd} and (b) the expanded network for Example \ref{ex:dp}. Green-shaded arcs represent the longest-path and optimal solution (in colour.)} 
        \label{fig:knap_dd} 
    \end{figure}
\end{example}

\subsection{Modeling Problems as DDs}
\label{sec:DDcompilation}

The standard methodology to model \ref{model:discretep} as $\dd$ consists of two steps \citep{bergman2016}. The first reformulates \ref{model:discretep} as a dynamic program (DP) and generates its state-transition graph, where states and actions are mapped to nodes and arcs, respectively. The second steps compresses the graph through a process known as \textit{reduction}, where nodes with redundant information are merged. This is a key process in the methodology, often reducing the graph by orders of magnitude in practice (e.g., \citealt{newton2019theoretical}). We detail the modeling framework in Appendix \ref{sec:modelDD} for reference.

Due to the intrinsic connection between DPs and decision diagrams, it often follows that decision diagrams are appropriate if \ref{model:discretep} has a compact recursive reformulation, in that the state space is of relatively low dimensionality either generally or for instances of interest. While the classical examples are knapsack problems with small right-hand sides, current studies expand on this class by either proposing more compact DP formulations or exploiting the structure of $\dd$ via reduction. Recent examples include submodular and nonlinear problem classes \citep{bergman2018discrete,davarnia2020outer}, semi-definite inequalities \citep{castro2020combinatorial}, scheduling \citep{cire2019network}, and graph-based problems with limited bandwidth \citep{haus2017compact}. In a two-stage stochastic programming setting, \cite{lozano2018binary} and \citet{macneil2024leveraging} also use decision diagrams as a convexification device for a Bender's decomposition approach. We refer to \cite{bergman2016} for additional examples.

\section{Constrained Shortest-Path Models}
\label{sec:CSPModels}

In this section, we introduce the two constrained shortest-path models over DDs that we leverage in our CSP reformulations. Specifically, we present a mixed-integer linear programming model (MILP) in \S\ref{sec:polyhedralperspective} and a dynamic program (DP) in \S\ref{sec:functionalperspective}. The first model exposes polyhedral structure and is appropriate, e.g., if the resulting formulation is sufficiently compact for mathematical programming solvers, which we leverage for our leader-follower two-stage problem (\S\ref{sec:bilevel}). The second model preserves the decision diagram structure and is amenable for combinatorial constrained shortest-path algorithms, which we exploit for the classical robust optimization approach (\S\ref{sec:robustoptimization}).

\subsection{General Formulation}
\label{sec:generalmodel}

Let $\dd = (\N,\A,\valvec,\lenvec)$ be a decision diagram encoding a given discrete optimization problem \ref{model:discretep}. We investigate reformulations of optimization problems based on the constrained shortest-path model 
\begin{subequations}
\begin{align}
    \min_{\yvec \in \{0,1\}^{|\A|}}
        &\quad \sum_{a \in \A} \len{a} y_{a} \label{model:csp} \tag{CSP-$\dd$}
        \\
    \text{s.t.} 
        &\quad 
            \textnormal{$\{a \in \A \colon y_a = 1\}$ is an $\rootnode-\terminalnode$ path in $\dd$,} \label{csp:pathct} \\
        &\quad 
            \Gmat \yvec \le \hvec, \label{csp:side}
\end{align}
\end{subequations}
where the inequality system \eqref{csp:side} corresponds to $m \ge 1$ arc-based side constraints for a non-negative coefficient matrix $\Gmat \in \mathbb{R}^{m \times |A|}$ and a non-negative right-hand side vector $\hvec \in \mathbb{R}^m$. 

We consider additional modeling assumptions when reformulating problems as \ref{model:csp}. First, $\dd$ is of computationally tractable size in the number of variables $n$ of the model \ref{model:discretep} it encodes (e.g., as in the cases in \S \ref{sec:DDcompilation}). Second, building a new decision diagram $\dd'$ that also satisfies a subset of constraints \eqref{csp:side} is not practical due to, e.g., its potentially large size. Finally, for simplicity we also assume that there exists at least one $\rootnode-\terminalnode$ path that satisfies \eqref{csp:side}. Results below can be directly adapted otherwise to detect infeasibility. 

Our goal is to study \ref{model:csp} as a modeling approach to reveal structural properties and derive alternative solution methods to broader problem classes. In particular, modeling problems as \ref{model:csp} consists of identifying which constraints should be incorporated within $\dd$, and which should be cast in the form of side constraints \eqref{csp:side} by choosing an appropriate $\Gmat$ and $\hvec$. 

We note that \ref{model:csp} is a special case of the resource-constrained shortest-path problem \citep{Irnich2005} on the structure imposed by a decision diagram. For instance, if variables are binaries, each node has at most two outgoing arcs. Nonetheless, \ref{model:csp} still preserves generality regardless of the size of the network, which is reflected in its computational complexity below.

\begin{proposition}
    \label{prop:complexity}
    \ref{model:csp} is (weakly) NP-hard even when $m=1$ and $\dd$ has one node per layer.
\end{proposition}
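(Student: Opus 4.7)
The plan is to establish NP-hardness via a reduction from the 0-1 Knapsack Problem, which is well known to be weakly NP-hard. Given an instance of Knapsack with items $j = 1, \dots, n$, profits $c_j \in \mathbb{Z}_{\geq 0}$, weights $w_j \in \mathbb{Z}_{\geq 0}$, and capacity $W \in \mathbb{Z}_{\geq 0}$, I will construct an instance of \ref{model:csp} with $m = 1$ side constraint and a decision diagram $\dd$ having exactly one node per layer.

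The construction goes as follows. First I build $\dd$ with $n+1$ layers $\N_1, \dots, \N_{n+1}$, each containing a single node: $\N_1 = \{\rootnode\}$, $\N_{n+1} = \{\terminalnode\}$, and $\N_j = \{u_j\}$ for $j = 2, \dots, n$. Between the (unique) node in layer $j$ and the (unique) node in layer $j+1$, I place two parallel arcs $a_{j,0}$ and $a_{j,1}$, corresponding respectively to the assignments $x_j = 0$ and $x_j = 1$, with arc values $\val{a_{j,0}} = 0$ and $\val{a_{j,1}} = 1$. I set the arc lengths to $\len{a_{j,0}} = 0$ and $\len{a_{j,1}} = -c_j$ (so that minimization over path lengths in $\dd$ corresponds to maximizing total profit). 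Finally, for the single side constraint ($m = 1$), I use row $\gvec \in \mathbb{R}^{|\A|}_{\geq 0}$ with entries $\gvec_{a_{j,0}} = 0$ and $\gvec_{a_{j,1}} = w_j$, and right-hand side $\hvec = W$.

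By construction, every $\rootnode-\terminalnode$ path in $\dd$ corresponds to a unique binary vector $\xvec \in \{0,1\}^n$ obtained by reading off the value assignments of its arcs, and vice versa. The path length equals $-\sum_j c_j x_j$, and the side constraint \eqref{csp:side} evaluates to $\sum_j w_j x_j \leq W$. Therefore, an optimal solution of this \ref{model:csp} instance yields an optimal knapsack solution, and the reduction is clearly polynomial in the input size. Since all coefficients $c_j, w_j, W$ are used directly (the reduction is pseudo-polynomial in the usual sense but polynomial-time under the binary encoding), weak NP-hardness of Knapsack transfers to \ref{model:csp}.

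I do not anticipate any significant obstacle: the reduction is direct, and the structural restrictions (one node per layer, $m = 1$) are precisely what a Knapsack instance naturally fits into. The only item worth checking carefully is that the assumption on non-negativity of $\gvec$ and $\hvec$ stated for \eqref{csp:side} is satisfied, which it is since weights and capacity are non-negative. Objective-length negativity is not restricted by the model definition ($\len{a} \in \mathbb{R}$), so no further adjustment is needed.
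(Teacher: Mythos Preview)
Your proof is correct and follows essentially the same approach as the paper: both reduce from the 0-1 knapsack problem by building a width-one decision diagram encoding all of $\{0,1\}^n$, encoding the capacity constraint as the single side constraint, and negating profits to convert maximization to minimization. Your additional remark verifying the non-negativity of $\Gmat$ and $\hvec$ is a nice touch that the paper's proof leaves implicit.
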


\subsection{Mathematical Program}
\label{sec:polyhedralperspective}

A classical reformulation for \ref{model:csp} is a mathematical program that rewrites the path constraints via balance-of-flow equalities. More precisely, let $\inarcs{u} = \{ a \in \A \colon \head{a} = u \}$ and $\outarcs{u} = \{ a \in \A \colon \tail{a} = u \}$ be the set of arcs directed in and out of the node $u$, respectively. Then, the program
\begin{subequations}
\begin{align}
    \min_{\yvec \in \{0,1\}^{|\A|}} 
        \tag{MILP-$\dd$}
        \label{model:cspmilp}
        &\quad
            \sum_{a \in \A} \len{a} y_{a} \\
    \text{s.t.} 
        &\quad 
            \sum_{a \in \outarcs{\rootnode}} y_a = 1, \label{SP1} \\
        &\quad 
            \sum_{a \in \Gamma^+(u)} y_a - \sum_{a \in \Gamma^-(u)} y_a = 0, 
                &&\forall u \in \N \setminus\set{\rootnode, \terminalnode}, \label{SP2} \\
        &\quad 
            \Gmat \yvec \le \hvec \label{SP3},
\end{align}
\end{subequations}

is a binary linear reformulation of \ref{model:csp} which is suitable, e.g., to standard MILP solvers.

The objective function of \ref{model:cspmilp}, alongside constraints \eqref{SP1}-\eqref{SP2}, is a linear extended formulation of \ref{model:discretep} in the space of $\yvec$ variables \citep{behle2007binary}. That is, there exists a one-to-one mapping between solutions $\yvec$ satisfying \eqref{SP1}-\eqref{SP2} and a feasible solution $\xvec \in \feasibleSet$, and the objective function value of $\yvec$ match that of $f(\xvec)$. Thus, \ref{model:cspmilp} provides a polyhedral representation of the constrained shortest-path problem. We also note that similar models combining a shortest-path linear formulation with side constraints appear, e.g., in classical extended formulations based on DPs (e.g., \citealt{eppen1987solving, conforti2010extended}). 

\subsection{Dynamic Programming}
\label{sec:functionalperspective}

An alternative common model is to reformulate \ref{model:csp} as a DP, which allows for combinatorial algorithms that exploit the network structure and scale to larger problem sizes. The underlying principle is to perceive each inequality in \eqref{csp:side} as a ``resource'' with limited capacity. DP approaches enumerate paths ensuring that the consumed resource, here modeled as \textit{state} variables, does not exceed the budget specified by the inequality right-hand sides. 

We introduce such a DP model for \ref{model:csp} defined over the structure of a decision diagram $\dd$. With each node $u \in \N$ we associate a function $\Vfun_u \colon \mathbb{R}^m \rightarrow \mathbb{R}$ that evaluates to the constrained shortest-path value from $u$ to the terminal $\terminalnode$. The function argument is a state vector $\svec = (\state{1}, \dots, \state{m}) \in \mathbb{R}^m$ where each $\state{i}$ represents the amount consumed of the $i$-th resource, $i \in \{1,\dots,m\}$. Let $\gvec_a$ be the vector defined by the $a$-th column of $\Gmat$, $a \in \A$. We write the Bellman equations
\begin{align}
    \Vfun_u(\svec) = 
    \left\{
    \begin{array}{ll}
        \min_{a \in \outarcs{u} \colon \svec + \gvec_a \le \hvec} \left \{ \len{a} + V_{\head{a}}(\svec + \gvec_a)  \right\},
            & \textnormal{if $u \neq \terminalnode$,}\\
        0,
            & \textnormal{otherwise.}
    \end{array}    
    \right.
    \tag{DP-$\dd$} \label{model:cspdp}
\end{align}
where $\min_{\emptyset} \{ \cdot \} = +\infty$ and $V_{\head{a}}$ is the value function of the head node $\head{a}$ of arc $a$. Note that \ref{model:cspdp} resembles a traditional shortest-path forward recursion in a graph \citep{cormen2009introduction}, except that traversals are constrained by the condition $\svec + \gvec_a \le \hvec$.

\begin{proposition}
    \label{prop:DPvalidity}
    The optimal value of \ref{model:csp} is $\Vfun_{\rootnode}(\mathbf{0})$, where $\mathbf{0} \equiv (0,\dots,0) \in \mathbb{R}^m$.
\end{proposition}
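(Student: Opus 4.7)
The plan is to prove the identity by backwards induction on the layer structure of $\dd$, using a strengthened invariant that simultaneously covers every node and every admissible starting state. Specifically, I would establish the following claim: for every node $u \in \N$ and every state $\svec \in \mathbb{R}^m$ with $\svec \le \hvec$, the value $\Vfun_u(\svec)$ equals the minimum of $\sum_{i=1}^{k} \len{a_i}$ over all $u$-$\terminalnode$ paths $(a_1,\dots,a_k)$ in $\dd$ satisfying the prefix-feasibility condition $\svec + \sum_{i=1}^{j} \gvec_{a_i} \le \hvec$ for every $j \in \{1,\dots,k\}$, with the convention that $\Vfun_u(\svec) = +\infty$ if no such path exists.

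The base case is $u = \terminalnode$: the only $\terminalnode$-$\terminalnode$ path is the empty one, which has length $0$ and vacuously satisfies the prefix condition; the recursion \ref{model:cspdp} agrees by setting $\Vfun_{\terminalnode}(\svec) = 0$. For the inductive step at $u$ in layer $j<n+1$, I assume the claim holds for every head node $\head{a}$ with $a \in \outarcs{u}$, which lies in layer $j+1$. Every $u$-$\terminalnode$ path factors uniquely as a first arc $a \in \outarcs{u}$ followed by a $\head{a}$-$\terminalnode$ subpath, and the prefix-feasibility condition for the full path decomposes into $\svec + \gvec_a \le \hvec$ together with the prefix-feasibility of the tail starting from the shifted state $\svec + \gvec_a$. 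The inductive hypothesis then identifies the optimal continuation from $\head{a}$ with $\Vfun_{\head{a}}(\svec + \gvec_a)$, so minimizing over the feasible choices of first arc recovers exactly the recursion defining $\Vfun_u(\svec)$.

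Applying the claim at $u = \rootnode$ with $\svec = \mathbf{0}$ shows that $\Vfun_{\rootnode}(\mathbf{0})$ is the minimum length of an $\rootnode$-$\terminalnode$ path whose ordered arc sequence $(a_1,\dots,a_n)$ satisfies $\sum_{i=1}^{j} \gvec_{a_i} \le \hvec$ for every $j$. Because $\Gmat$ has nonnegative entries, the partial sums $\sum_{i=1}^{j} \gvec_{a_i}$ are componentwise nondecreasing in $j$, so every prefix constraint is dominated by the terminal one. Consequently, the prefix condition collapses to $\sum_{a \in P} \gvec_a \le \hvec$, which for the incidence vector $\yvec \in \{0,1\}^{|\A|}$ of the path $P$ is precisely $\Gmat \yvec \le \hvec$, i.e., the side-constraint system \eqref{csp:side}. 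Since the objective along the path matches the objective of \ref{model:csp}, the two optimization problems have the same optimal value.

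The proof is essentially routine once the strengthened invariant is in place; the one substantive ingredient is the use of $\Gmat \ge 0$ to reduce the per-step prefix feasibility in the DP to the aggregated side-constraint inequality of the original model. If this nonnegativity assumption were relaxed, the recursion would have to carry the exact state vector as in a standard resource-constrained shortest path, but then the equivalence between $\Vfun_{\rootnode}(\mathbf{0})$ and the optimal value would still follow from the same layered induction, now with prefix-feasibility as a genuine binding constraint rather than a dominated one.
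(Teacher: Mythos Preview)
Your argument is correct. The paper's proof takes a somewhat different organizational route: it argues a bijection directly by fixing an optimal policy $\pi_u(\svec)$, \emph{unrolling} the recursion $\Vfun_{\rootnode}(\mathbf{0})$ forward to extract an $\rootnode$--$\terminalnode$ path, and then checking that the accumulated state satisfies $\Gmat\yvec \le \hvec$; the converse direction is handled separately, invoking $\Gmat \ge 0$ to certify that any feasible $\yvec$ in \ref{model:csp} yields a feasible state trajectory in the DP. Your backward induction instead proves a uniform value-function identity $\Vfun_u(\svec) = $ (prefix-constrained shortest $u$--$\terminalnode$ path length) for all nodes and states simultaneously, and only afterwards specializes to $(\rootnode,\mathbf{0})$ and collapses the prefix constraints via $\Gmat \ge 0$. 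Both arguments hinge on the same key observation (nonnegativity of $\Gmat$ makes the prefix inequalities redundant given the terminal one), but your induction packages the two directions of the bijection into a single invariant, which is arguably cleaner and also makes explicit what survives if $\Gmat \ge 0$ is dropped.
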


Insights into the distinction between \ref{model:cspmilp} and \ref{model:cspdp} are revealed by duality. Given Proposition \ref{prop:DPvalidity}, let $\statespace{u} \subseteq \mathbb{R}^m$ be the set of states that are reachable at a node $u \in \N$ by the recursion starting with $\Vfun_{\rootnode}(\mathbf{0})$ and budget $\hvec$, i.e., $\statespace{\rootnode} = \{\mathbf{0}\}$ and 
\begin{align*}
    \statespace{u} = \left\{ \svec' = \svec + \gvec_a \colon \svec' \le \hvec, \, \exists a \in \inarcs{u}, \, \exists \svec \in \statespace{\tail{a}} \right\}, &&\forall u \in \N \setminus \{\rootnode\}.
\end{align*}

The set $\statespace{u}$ is finite for all nodes $u$ because $\dd$ has a finite number of arcs. We can therefore reformulate \ref{model:cspdp} as a linear program with the evaluations of $\Vfun_u(\cdot)$ as variables \citep{bertsekas2012dynamic}:
\begin{subequations}
\begin{align}    
\max_{\mathbf{V}} 
    \quad& 
    \Vfun_{\rootnode}(\mathbf{0}) \\
\textnormal{s.t.}
    \quad&
        \Vfun_{\tail{a}}(\svec) \le \len{a} + \Vfun_{\head{a}}\left(\svec + \gvec_a\right), 
            && \forall a \in A, \svec \in \statespace{\tail{a}}, \label{lpdp1} \\
    \quad&
        \Vfun_{\terminalnode}(\svec) = 0, 
            && \forall \svec \in \statespace{\terminalnode}. \label{lpdp2}
\end{align}
\end{subequations}

Notice that, because of \eqref{lpdp2}, inequalities \eqref{lpdp1} simplify after replacing all variables $V_{\head{a}}(\cdot)$ by 0 whenever $\head{a} = \terminalnode$. The dual of the linear program obtained after such an adjustment is 
\begin{subequations}
\begin{align}
    \min_{w \ge 0}
        &\quad 
            \sum_{a \in A} \sum_{\svec \in \statespace{\tail{a}}} \len{a} w_{a, \svec} \label{model:dplp} \tag{LP-$\dd$}
        \\
    \text{s.t.} 
        &\quad 
            \sum_{a \in \outarcs{\rootnode}} w_{a, \mathbf{0}} = 1,  \\
        &\quad 
            \sum_{a \in \Gamma^+(u)} w_{a,\svec} - \sum_{a \in \Gamma^-(u)} \sum_{\substack{ \svec' \in \statespace{\tail{a}} \\ \svec' + \gvec_a = \svec} } w_{a, \svec'} = 0, 
                    &&\forall u \in \N \setminus \{ \rootnode, \terminalnode \}, \forall \svec \in \statespace{u}.
\end{align}
\end{subequations}

Thus, \ref{model:cspdp} solves a (non-constrained) shortest-path problem on an extended variable space $w_{a, \svec}$ that incorporates the state information $\svec$, as opposed to the DD arc-only based variables $y_{a}$ in \ref{model:cspmilp}. A solution to \ref{model:cspdp} is equivalently a state-arc trajectory 
$(\svec_1, a_1), (\svec_2, a_2), \dots, (\svec_n, a_n)$ 
such that $(a_1,\dots,a_n)$ is an $\rootnode-\terminalnode$ path in $\dd$ and $\svec_i = \svec_{i-1} + \gvec_{a_{i-1}} \le \hvec$ for $i=2,\dots,n$, $\svec_1 = \mathbf{0}$. 

\medskip
\begin{example}
    \label{ex:dp}
    Consider the knapsack instance from Example \ref{ex:dd} and the side constraint 
    \begin{align}
        \label{ex:side}
        5x_1 + 2x_2 + 2x_3 + 7x_3 \le 7.
    \end{align}

    We can formulate it as \ref{model:csp} using the DD from Figure \ref{fig:knap_dd}(a) to encode the original feasible set $\feasibleSet$, plus a single side constraint $\sum_{a \in \A} g_a y_a \le 7$ where $g_a$ is the coefficient of variable $x_{\layer{a}}$ in \eqref{ex:side} if $\val{a} = 1$, and $0$ otherwise. Figure~\ref{fig:knap_dd}(b) depicts the extended network space where \ref{model:dplp} is defined. The numbers on the node circles represent the reachable states (i.e., labels) and the squares the original nodes in Figure \ref{fig:knap_dd}(a) they are associated with. There exists a single label since $m=1$. 
    
    Further, two nodes are connected if the labels are consistent with the side constraint and the arc exists in the original DD. For example, the labels in $u_4$ only have outgoing arcs with value $0$, and their label remain constant since the coefficient of such arcs is also zero. \hfill $\square$
\end{example}

\medskip 

State-of-the-art combinatorial methods for 
general constrained path problems, such as the pulse method \citep{cabrera2020} and the CHD \citep{vera2021computing}, are directly applicable and operate on the network specified in \ref{model:dplp} by carefully choosing which states to enumerate, a process commonly known as labeling. Such methods are effective if such labels have exploitable structure, which we leverage in \S\ref{sec:robustoptimization}. Otherwise, if the set of states is difficult to describe but the representation is sufficiently compact, then \ref{model:cspmilp} could be more beneficial, which we investigate in \S \ref{sec:bilevel}.


\section{Leader-Follower Two-Stage Programs}
\label{sec:bilevel}

In this section, we introduce single-level reformulations based on \ref{model:csp} for two-stage programs
\begin{subequations}
\begin{align}
\label{model:bilevel}
\tag{DTS}
\max_{\xvec^L, \xvec^F} 
    &\quad
        \sum_{j=1}^n 
        \left( 
            \cost_{1j}^L \, x_{j}^L +  \cost_{2j}^L \, x_{j}^F
        \right) 
        \\
\textnormal{s.t.}
    &\quad
        \Amat^L \xvec^L + \Bmat^L \xvec^F \le \bvec^L, \\
    &\quad 
        \xvec^F \in \argmax_{ \overline{\xvec}^F }
        \left\{
            \sum^n_{j=1} \cost_{j}^F \, \overline{x}_j^F 
            \colon 
            \Amat^F \, \overline{\xvec}^F \le \bvec^F, \;\; 
            \overline{\xvec}^F \le \mathbf{1} - \xvec^L, \;\;
            \overline{\xvec}^F \in \{0,1\}^n
        \right\}, \label{bilevel:follower} \\
    &\quad 
    \xvec^L \in \{0,1\}^n,
\end{align}
\end{subequations}
where $\cost_1^L, \cost_2^L, \cost^F \in \mathbb{R}^{n}$ are cost vectors; $\Amat^L, \Bmat^L \in \mathbb{R}^{m_L \times n}$ are the \textit{leader's} coefficient matrices for $m_L \ge 0$; $\Amat^F \in \mathbb{R}^{m_F \times n}$ is the \textit{follower's} coefficient matrix for $m_F \ge 0$; $\bvec^L \in \mathbb{R}^{m_L}$, $\bvec^F \in \mathbb{R}^{m_F}$ are right-hand side vectors of the leader's and follower's, respectively; and $\mathbf{1}$ is an $n$-vector of ones. We assume the \emph{relatively complete recourse property}, i.e., for any feasible leader solution there exists a corresponding feasible follower response, ensuring the existence of an optimal solution for the follower problem described in \eqref{bilevel:follower}. This model is also \textit{optimistic}, in that it assumes the follower will pick its optimal solution that maximizes the leader's objective, in case of degeneracy.

Formulation \ref{model:bilevel} is prevalent in combinatorial applications from the literature. Specifically, the inequality $\overline{\xvec}^F \le \mathbf{1} - \xvec^L$ in \eqref{bilevel:follower} represents exogenous ``blocking'' decisions by a leader to prevent actions from an non-cooperative follower, which operates optimally according to its own utility function. Examples of problems modeled as \ref{model:bilevel} include network interdiction problems \citep{MortonEtal07, cappanera2011, hemmati2014, lozano2017b}, minimum edge and vertex blocker problems \citep{bazgan2011, Mahdavi2014}, and other classes of adversarial problems \citep{costa2011, Caprara2016, zare2018}.

Problems of this class, however, are notoriously difficult because of the ``argmax" constraint \eqref{bilevel:follower} that embeds the follower's optimization problem into the leader's model. In particular, the follower problem is non-convex since $\xvec^F$ are binaries, preventing the use of typical techniques for continuous problems such as dualize-and-combine \citep{kleinert2021survey}. Existing state-of-the-art methodologies rely, e.g., on specialized cutting-plane and search techniques \citep{fischetti2017new}.

In this section, we present a reformulation of \ref{model:bilevel} as \ref{model:cspmilp}, which is suitable to standard mathematical programming solvers if the underlying network $\dd$ is of tractable size. We start in \S\ref{sec:followerRemodel} by rewriting constraint \eqref{bilevel:follower} as a constrained shortest-path model, and extract an optimality certificate via polyhedral structure. Next, we present in \S\ref{sec:leaderRemodel} the full MILP reformulation based on the polyhedral structure of \ref{model:cspmilp}. Finally, we discuss in \S\ref{sec:bilevelgeneralizations} generalizations of the technique for broader classes of two-stage programs. 

\subsection{Reformulation of the Follower's Optimization Problem}
\label{sec:followerRemodel}

Let $\dd^F = (\N,\A,\valvec,\lenvec)$ encode the discrete optimization problem \ref{model:discretep} with feasible set 
\begin{align}
    \feasibleSet = 
    \{
        \xvec \in \{0, 1\}^n 
        \colon
        \Amat^F \xvec \le \bvec^F
    \} 
\end{align}
and objective $f(\xvec) = \sum_{j=1}^n \cost_j^F x_j$. That is, paths in $\dd^F$ correspond to solutions that satisfy the follower's constraints $\Amat^F \xvec \le \bvec^F$ and have arc lenghts given by
\begin{align*} 
\len{a}=&
\begin{cases}
\cost_{\layer{a}}^F, &\text{if } \val{a} = 1, \\
0, & \text{otherwise},
\end{cases} 
    \quad \quad \forall a\in\A,
\end{align*}
where we recall that $\val{a}$ is the value assignment of arc $a$. Proposition \ref{prop:followerDD} shows the link between constraint \eqref{bilevel:follower} and \ref{model:csp}.

\begin{proposition}
\label{prop:followerDD}
Let $\xvec^L \in \{0,1\}^n$ be any feasible leader's solution in \ref{model:bilevel}. There exists a one-to-one mapping between a follower's solution $\xvec^F$ satisfying \eqref{bilevel:follower} and an optimal solution of \ref{model:csp} defined over $\dd^F$ with a maximization objective sense and $m \leq |\A|$ side constraints of the form
\begin{align}
    y_a \le 1 - x^L_{\layer{a}}, \;\; \forall a \in \A \colon \val{a} = 1. \label{csp:follower}
\end{align} 
That is, the columns of $\Gmat$ associated with arcs $a \in \A$ such that $\val{a} = 1$ form an identity matrix, and the remaining columns are zero vectors. Further, $\hvec_{a} = 1 - x^L_{\layer{a}}$ if $\val{a} = 1$ and $\hvec_{a} = 0$ otherwise.
\end{proposition}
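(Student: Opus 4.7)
The plan is to verify the claim in three stages: first, recover the bijection between $\rootnode-\terminalnode$ paths in $\dd^F$ and feasible solutions of the follower's constraint system $\{\xvec^F \in \{0,1\}^n \colon \Amat^F \xvec^F \le \bvec^F\}$; second, show that the proposed side constraints \eqref{csp:follower} encode the blocking condition $\overline{\xvec}^F \le \mathbf{1} - \xvec^L$; and third, match objective values so that optimality is preserved.

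For the first stage, I would invoke the construction described in \S\ref{sec:networkDDs}: by hypothesis, $\dd^F$ encodes the feasible set $\feasibleSet = \{\xvec \in \{0,1\}^n \colon \Amat^F \xvec \le \bvec^F\}$, so each arc-specified $\rootnode-\terminalnode$ path $(a_1,\dots,a_n)$ corresponds to the binary vector $\xvec^F = (\val{a_1},\dots,\val{a_n}) \in \feasibleSet$, and every $\xvec^F \in \feasibleSet$ arises from exactly one such path. Equivalently, in the arc-flow space of \ref{model:cspmilp}, the mapping is $x^F_j = \sum_{a \in \A \colon \layer{a}=j,\, \val{a}=1} y_a$; because a path uses precisely one arc per layer, this sum contains at most one nonzero term, so $x^F_j \in \{0,1\}$ and the correspondence is indeed one-to-one.

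For the second stage, I would argue as follows. Fix a layer $j \in \{1,\dots,n\}$ and consider the constraints \eqref{csp:follower} restricted to arcs with $\tail{a} \in \N_j$ and $\val{a} = 1$. If $x^L_j = 1$, each such constraint becomes $y_a \le 0$, so no arc with value one in layer $j$ can belong to the path, forcing $x^F_j = 0 = 1 - x^L_j$. If $x^L_j = 0$, each constraint reduces to $y_a \le 1$, which is redundant, leaving $x^F_j$ free to take either value consistent with $x^F_j \le 1 = 1 - x^L_j$. Thus \eqref{csp:follower} is equivalent to $\xvec^F \le \mathbf{1} - \xvec^L$, and the description of $\Gmat$ and $\hvec$ in the statement follows directly from reading off the coefficients of these inequalities.

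For the third stage, observe that with arc lengths $\len{a} = \cost^F_{\layer{a}}$ when $\val{a}=1$ and $\len{a}=0$ otherwise, the length of the path corresponding to $\xvec^F$ is $\sum_j \cost^F_j x^F_j$, matching the follower's objective. Hence maximizing the path length over paths satisfying \eqref{csp:follower} is equivalent to the maximization problem \eqref{bilevel:follower}, giving a one-to-one mapping between optimal solutions of the two problems. The only subtlety, which I would flag explicitly, is that the optimal set in \eqref{bilevel:follower} may contain multiple solutions; the correspondence is between elements of the argmax set and optimal solutions of \ref{model:csp}, both parameterized by the fixed $\xvec^L$. Since this step is a straightforward verification once the encoding of \eqref{csp:follower} is understood, I do not expect any genuine obstacle — the main care is simply in writing the arc-to-variable mapping precisely so that the reduction from $\xvec^F \le \mathbf{1} - \xvec^L$ to the stated form of $\Gmat$ and $\hvec$ is unambiguous.
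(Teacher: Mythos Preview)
Your proposal is correct and follows essentially the same approach as the paper's proof: both invoke the construction of $\dd^F$ to establish the bijection between paths and solutions of $\Amat^F \xvec^F \le \bvec^F$, then argue that \eqref{csp:follower} forces $x^F_j = 0$ whenever $x^L_j = 1$ (and is vacuous otherwise), and finally match path lengths with the follower's objective to preserve optimality. Your write-up is more explicit about the arc-to-variable mapping and the description of $\Gmat$ and $\hvec$, but the underlying argument is identical.
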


Proposition \ref{prop:followerDD} leads to a combinatorial description of solutions $x^F$ observing \eqref{bilevel:follower} as paths in $\dd^F$ subject to side constraints \eqref{csp:follower}. Thus, we can derive an equivalent mathematical program reformulation of constraint \eqref{bilevel:follower} through \ref{model:cspmilp}. Inequalities \eqref{csp:follower}, however, have a special form in that they only impose bounds on arc variables $\yvec$, not significantly changing the coefficient matrix structure. Based on this observation, we present a more general result applicable when side constraints \eqref{csp:side} do not break the integrality of the network-based extended model \eqref{SP1}-\eqref{SP2} presented in \S\ref{sec:polyhedralperspective}. We recall that $\gvec_a$ is the $a$-th row of $\Gmat$, $a \in \A$.

\begin{proposition}
    \label{prop:feasiblitySet}
    Let $\dd = (\N,\A,\valvec,\lenvec)$ encode a discrete optimization problem \ref{model:discretep} and let $\Nmat$ be the coefficient matrix of $\yvec$ associated with the linear system 
    \eqref{SP1}-\eqref{SP2}.  If the matrix 
    $\left[ \begin{array}{l} \Nmat \\ \Gmat \end{array} \right]$ 
    is totally unimodular and $\hvec$ is integral, then 
    the set of optimal solutions to \ref{model:cspmilp} is $\textnormal{Proj}_{\yvec} \feasKKT$, where 
    \begin{subequations}
    \begin{align}
        \feasKKT = 
        \big\{ (\yvec, \pivec, \gammavec) \in \mathbb{R}^{|A|} \times \mathbb{R}^{|\N|} \times \mathbb{R}^{|\A|}
        \colon 
            &\gammavec \ge 0, \;\; \eqref{SP1}-\eqref{SP3},
            \\
            &\pi_{\tail{a}} - \pi_{\head{a}} + \gammavec^{\top} \gvec_{a} \ge \len{a}, \;\;\; \forall a \in \A,
            \\
            &\lenvec^{\top} \yvec - \pi_{\rootnode} - \gammavec^{\top} \hvec = 0, \label{eq:kkt_ct3} \\
            &\yvec \in \{0,1\}^{|\A|} \label{eq:kkt_ct4}
            \big\}.
    \end{align}
    \end{subequations}
\end{proposition}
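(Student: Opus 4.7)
The plan is to exploit the total unimodularity hypothesis together with LP duality to characterize the MILP optimal solutions via KKT-style conditions: $\feasKKT$ bundles together primal feasibility, dual feasibility, and strong duality for the LP relaxation of \ref{model:cspmilp}. First, I would consider the LP relaxation obtained by replacing $\yvec \in \{0,1\}^{|\A|}$ with $\yvec \ge 0$. Since $\left[ \begin{array}{l} \Nmat \\ \Gmat \end{array} \right]$ is totally unimodular and the right-hand side---consisting of the integral entries of \eqref{SP1}-\eqref{SP2} together with the integral $\hvec$---is integer, every vertex of the LP polyhedron is integer. Combined with the observation that the layered acyclic flow structure of $\dd$ forces $y_a \le 1$ for every feasible $\yvec$, integer vertices necessarily lie in $\{0,1\}^{|\A|}$. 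Consequently, the LP and MILP attain the same optimal value and share integer optimal solutions.

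Next, I would dualize the LP. Treating \ref{model:cspmilp} as a maximization (the case relevant to the follower's model in Proposition~\ref{prop:followerDD}; the minimization variant is symmetric), assigning free multipliers $\pivec$ to \eqref{SP1}-\eqref{SP2} and nonnegative multipliers $\gammavec$ to \eqref{SP3} yields the dual $\min \{ \pi_{\rootnode} + \gammavec^\top \hvec \colon \pi_{\tail{a}} - \pi_{\head{a}} + \gammavec^\top \gvec_a \ge \len{a}, \forall a \in \A,\ \gammavec \ge 0 \}$, whose feasible region coincides with the dual-feasibility block of $\feasKKT$. LP strong duality then states that a primal feasible $\yvec$ is LP optimal if and only if there exist dual feasible $(\pivec, \gammavec)$ satisfying $\lenvec^\top \yvec = \pi_{\rootnode} + \gammavec^\top \hvec$, which is precisely \eqref{eq:kkt_ct3}.

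To close the argument I would combine the two steps. Forward direction: given $(\yvec,\pivec,\gammavec) \in \feasKKT$, the triple is primal and dual feasible with matching objective values, so $\yvec$ is LP optimal; because $\yvec$ is integer and the LP and MILP share their optimal values, $\yvec$ solves \ref{model:cspmilp}. Backward direction: any MILP optimum is also an LP optimum (again by TU), and LP strong duality supplies compatible multipliers, producing a triple in $\feasKKT$ whose $\yvec$-component is the given solution. The main obstacle, and the reason the TU hypothesis is indispensable, is this bridge between integer and LP optima; without it the LP relaxation may strictly dominate the MILP, severing the equivalence with $\feasKKT$. The remaining verifications---sign conventions in the dual inequalities, the implicit $y_a \le 1$ bounds induced by flow balance, and the min/max alignment---are routine bookkeeping.
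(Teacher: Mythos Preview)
Your proposal is correct and follows essentially the same route as the paper: use total unimodularity to identify the LP relaxation with \ref{model:cspmilp}, write the LP dual with multipliers $\pivec$ on \eqref{SP1}--\eqref{SP2} and $\gammavec \ge 0$ on \eqref{SP3}, and invoke strong duality so that optimality is equivalent to primal feasibility, dual feasibility, and \eqref{eq:kkt_ct3}. Your explicit remark that the layered unit-flow structure already enforces $y_a \le 1$ (so no upper-bound duals are needed) is a useful clarification that the paper leaves implicit.
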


\smallskip 

Augmenting a network matrix with the identity matrix $G$ in this setting preserves total unimodularity (Prop.III-2.1, \citealt{wolsey1999integer}). Since $\hvec$ is integral, it follows from Proposition \ref{prop:feasiblitySet} that $\xvec^F$ satisfies \eqref{bilevel:follower} if and only if the continuous linear system \eqref{follow:ref1.1}-\eqref{follow:refVars} is feasible for a given leader solution $\xvec^L$:
\begin{subequations}
\begin{align}
 &\sum_{a \in \outarcs{\rootnode}} y_a = 1, \label{follow:ref1.1} \\
    &\sum_{a \in \Gamma^+(u)} y_a - \sum_{a \in \Gamma^-(u)} y_a = 0, &\forall u \in \N \setminus\set{\rootnode, \terminalnode}, \label{follow:ref1.2} \\
    & y_a \le 1 - x^L_{\layer{a}},  &\forall a \in \A \colon \val{a} = 1, \label{follow:ref1.3}\\
    &\pi_{\tail{a}} - \pi_{\head{a}} + \gamma_a \ge \len{a} \;\;\; &\forall a \in \A \colon \val{a} = 1, \label{follow:ref2} \\
    &\pi_{\tail{a}} - \pi_{\head{a}} \ge 0 \;\;\; &\forall a \in \A \colon \val{a} = 0, \label{follow:ref2.5} \\
    &\lenvec^{\top} \yvec - \pi_{\rootnode} - \sum_{a \in \A \colon \val{a} = 1} \gamma_a (1 - x^L_{\layer{a}}) = 0, \label{follow:ref3} \\
    &x^F_j - \sum_{a \in \A \colon \tail{a} \in \N_j, \val{a} = 1} y_a = 0, 
    &\forall j \in \{1,\dots,n\}, \label{follow:ref4} \\
    &x^F \in \{0,1\}^n, \yvec \in \{0,1\}^{|\A|},  \gammavec \ge 0, \label{follow:refVars} 
\end{align}
\end{subequations}

where $\pivec$ and $\gammavec$ are of appropriate dimensions in terms of $\feasKKT$. We note that the system \eqref{follow:ref1.1}-\eqref{follow:refVars} follows the structure of KKT-based reformulation approaches, i.e., \eqref{follow:ref1.1}-\eqref{follow:ref1.3}  ensure primal feasibility, \eqref{follow:ref2}-\eqref{follow:ref2.5} require dual feasibility, and \eqref{follow:ref3} ensures strong duality.

\subsection{Leader's MILP Model}
\label{sec:leaderRemodel}
An extended, single-level reformulation of \ref{model:bilevel} can be obtained directly when replacing \eqref{bilevel:follower} by \eqref{follow:ref1.1}-\eqref{follow:refVars}. However, the resulting formulation is nonlinear because of the terms $\gamma_a (1 - x^L_{\layer{a}})$ in \eqref{follow:ref3}. We show next how to linearize \eqref{follow:ref3} based on  Proposition \ref{prop:optSolFollower}.

\begin{proposition}
    \label{prop:optSolFollower}
    Any feasible solution $(\hat{\xvec}^F, \hat{\yvec}, \hat{\pivec}, \hat{\gammavec})$ to \eqref{follow:ref1.1}-\eqref{follow:refVars} can be modified to satisfy $\hat{\gamma}_a = M_a$ for all arcs $a \in \A$ such that $x^L_{\layer{a}} = 1$ and $\val{a} = 1$ preserving feasibility, where $M_{a}$ values are large constants that satisfy
\begin{equation}
 M_{a} \ge \len{a}+ \hat{\pi}_{\head{a}}-\hat{\pi}_{\tail{a}}  ,  \;\; \forall a \in \A\colon x^L_{\layer{a}} = 1 \text{ and } \val{a} = 1. \label{bigMCondition}
\end{equation} 
\end{proposition}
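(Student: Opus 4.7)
The plan is to verify feasibility of the modified solution constraint by constraint, exploiting the fact that for arcs with $x^L_{\layer{a}} = 1$ the factor $(1 - x^L_{\layer{a}})$ in the strong-duality equation vanishes, so $\gamma_a$ is effectively unconstrained by \eqref{follow:ref3}. Let $\A^* = \{a \in \A \colon x^L_{\layer{a}} = 1, \val{a} = 1\}$ denote the set of arcs to be modified, and define a new solution $(\hat{\xvec}^F, \hat{\yvec}, \hat{\pivec}, \tilde{\gammavec})$ where $\tilde{\gamma}_a = M_a$ for all $a \in \A^*$ and $\tilde{\gamma}_a = \hat{\gamma}_a$ otherwise.

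First, I would observe that the primal constraints \eqref{follow:ref1.1}--\eqref{follow:ref1.3} as well as the linking constraints \eqref{follow:ref4} involve only $\xvec^F$ and $\yvec$, and hence remain satisfied since these variables are unchanged. Next, for the dual-feasibility constraints: \eqref{follow:ref2.5} does not involve $\gammavec$ and is unaffected. For \eqref{follow:ref2}, arcs $a \notin \A^*$ keep $\tilde{\gamma}_a = \hat{\gamma}_a$ together with $\hat{\pivec}$, so these inequalities still hold. For $a \in \A^*$, substituting $\tilde{\gamma}_a = M_a$ reduces the inequality to $\hat{\pi}_{\tail{a}} - \hat{\pi}_{\head{a}} + M_a \ge \len{a}$, which is exactly the assumed bound \eqref{bigMCondition}.

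The heart of the argument is the strong-duality constraint \eqref{follow:ref3}. For every $a \in \A^*$, we have $1 - x^L_{\layer{a}} = 0$, so the term $\tilde{\gamma}_a (1 - x^L_{\layer{a}})$ contributes zero regardless of the value assigned to $\tilde{\gamma}_a$. All other $\gamma_a$ values (with $\val{a} = 1$ and $x^L_{\layer{a}} = 0$) are unchanged, so the left-hand side of \eqref{follow:ref3} evaluates identically to that of the original feasible solution, and the equation remains satisfied. Finally, nonnegativity $\tilde{\gammavec} \ge 0$ holds because $M_a \ge \len{a} + \hat{\pi}_{\head{a}} - \hat{\pi}_{\tail{a}}$ can be chosen large enough to be nonnegative (implicit in the ``large constant'' convention), and the unmodified components inherit nonnegativity from $\hat{\gammavec}$.

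The only delicate point is to recognize that the bound \eqref{bigMCondition} is precisely what dual feasibility demands under the substitution $\tilde{\gamma}_a = M_a$; everything else is mechanical. No obstacle requires a nontrivial construction, since the vanishing of $(1 - x^L_{\layer{a}})$ in \eqref{follow:ref3} fully decouples $\gamma_a$ for $a \in \A^*$ from the strong-duality certificate, making the substitution harmless.
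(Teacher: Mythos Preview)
Your proof is correct and follows essentially the same approach as the paper: you verify that changing $\gamma_a$ to $M_a$ on the designated arcs leaves \eqref{follow:ref3} intact because the factor $(1-x^L_{\layer{a}})$ vanishes, and that \eqref{follow:ref2} holds by the defining bound \eqref{bigMCondition}. Your presentation is slightly more systematic (a full constraint-by-constraint sweep, including an explicit remark on nonnegativity of $\tilde{\gamma}$), but the substance is identical to the paper's argument.
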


\medskip
Using the result above, we rewrite the left-hand side of \eqref{follow:ref3} as 
\begin{align*}
    \lenvec^{\top} \yvec - \pi_{\rootnode} - \sum_{a \in \A \colon \val{a} = 1 } \gamma_a (1 - x^L_{\layer{a}}) 
    &=
    \lenvec^{\top} \yvec - \pi_{\rootnode} - \sum_{a \in \A \colon \val{a} = 1 } \gamma_a + \sum_{a \in \A \colon \val{a} = 1 } \gamma_a x_{\layer{a}}^L \\
    &=
    \lenvec^{\top} \yvec - \pi_{\rootnode} - \sum_{a \in \A \colon \val{a} = 1 } \gamma_a + \sum_{a \in \A \colon \val{a} = 1 } M_a x^L_{\layer{a}},
\end{align*}
where the second equality follows from Proposition \ref{prop:optSolFollower}. This results in the following MILP reformulation: 
\begin{subequations}
\begin{align}
    \label{model:mipbilevel}
    \tag{MILP-DB}
    \max_{\xvec^L, \xvec^F, \yvec, \pivec, \gammavec} 
        &\quad
        \sum_{j=1}^n 
            \cost_{1j}^L \, x_{j}^L +  \cost_{2j}^L \, x_{j}^F
        \\
    \textnormal{s.t.}
        &\quad
            \Amat^L \xvec^L + \Bmat^L \xvec^F \le \bvec^L, \\
        &\quad 
            \eqref{follow:ref1.1}-\eqref{follow:ref2.5}, \eqref{follow:ref4}-\eqref{follow:refVars} \label{mipdp:primalDual} \\
        &\quad 
            \lenvec^{\top} \yvec - \pi_{\rootnode} - \sum_{a \in \A \colon \val{a} = 1 } \gamma_a + \sum_{a \in \A \colon \val{a} = 1 } M_a x^L_{\layer{a}} = 0, \label{mipdp:strongduality} \\
        &\quad 
            \gamma_a - M_a x^L_{\layer{a}} \ge 0, &&\forall a \in \A \colon \val{a} = 1, \label{mipdp:consistency} \\
        &\quad
            \xvec^L, \xvec^F \in \{0,1\}^n. \label{mipdp:vars}
\end{align}
\end{subequations}

The inequality \eqref{mipdp:consistency} ensures consistency of $\gammavec$ in line with Proposition \ref{prop:optSolFollower} and the derivation of \eqref{mipdp:strongduality}. We remark that \ref{model:mipbilevel} is a standard MILP. Thus, the formulation is appropriate to standard off-the-shelf solvers whenever $\dd^F$ is a suitable encoding of the follower's suproblem.

The model above contains big-M constants, which is typical in specialized two-stage techniques \citep{Wood93,CormicanEtal98,BrownEtal05a,LimSmith07,SmithEtal07,MortonEtal07,BayrakBailey08}. 

We show in Proposition \ref{prop:coolBigM} that under some mild conditions, a valid value for $M_a$ is given by the arc length $\len{a}$ for all $a \in \A \colon \val{a} = 1$. Noticeably, the relevance of this result is that it also leverages the decision diagram structure. We use the results of this proposition in the computations for the case study. 
\begin{proposition}
    \label{prop:coolBigM}
If the follower constraint coefficient matrix $\Amat^F \in \mathbb{R}^{m_F \times n}_+$, then setting $M_a = \len{a}$ for all $a \in \A \colon \val{a} = 1$ yields a valid formulation. 
\end{proposition}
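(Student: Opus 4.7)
My plan is to apply Proposition \ref{prop:optSolFollower} and show that the choice $M_a = \len{a}$ satisfies the big-M condition \eqref{bigMCondition}. Concretely, for every leader decision $\xvec^L$ I need to exhibit a feasible dual solution $(\hat{\pivec}, \hat{\gammavec})$ of the follower's LP relaxation for which $\hat{\pi}_{\tail{a}} \geq \hat{\pi}_{\head{a}}$ whenever $\val{a} = 1$ and $x^L_{\layer{a}} = 1$; equation \eqref{bigMCondition} then reduces to $\len{a} \geq \len{a}$ and is trivially met.

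The candidate dual is built from longest-path potentials in the \emph{restricted} diagram obtained from $\dd^F$ by deleting every arc $a$ with $\val{a} = 1$ and $x^L_{\layer{a}} = 1$. Specifically, I would set $\hat{\pi}_{\terminalnode} = 0$, let $\hat{\pi}_u$ equal the longest $u$-to-$\terminalnode$ path length in the restricted diagram, and pair it with $\hat{\gamma}_a = \len{a}$ on blocked 1-arcs and $\hat{\gamma}_a = 0$ elsewhere. Using Proposition \ref{prop:feasiblitySet} together with the integrality of the right-hand side, this pair attains the primal optimum $\hat{\pi}_{\rootnode}$ and is dual-feasible on every non-blocked arc by the standard longest-path inequalities; blocked arcs contribute $\hat{\gamma}_a(1-x^L_{\layer{a}}) = 0$ to the dual objective, so strong duality is preserved.

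The heart of the argument is the monotonicity claim $\hat{\pi}_{\tail{a}} \geq \hat{\pi}_{\head{a}}$ for every arc of $\dd^F$. For $\val{a} = 0$ arcs this is immediate since $\len{a} = 0$ and dual feasibility gives $\hat{\pi}_{\tail{a}} - \hat{\pi}_{\head{a}} \geq 0$. For $\val{a} = 1$ arcs $(u, v)$, the hypothesis $\Amat^F \geq 0$, combined with relatively complete recourse (which forces $\bvec^F \geq 0$ since $\mathbf{0}$ must be follower-feasible), guarantees that every non-terminal node admits an outgoing 0-arc; let $u'$ be the head of the 0-arc out of $u$. Because $\Amat^F \geq 0$ makes the follower's feasible set downward closed in $\xvec$, any $(x_{\layer{a}+1}, \ldots, x_n)$ completion encoded by a path from $v$ (whose associated prefixes have $x_{\layer{a}} = 1$) remains feasible after toggling $x_{\layer{a}}$ to $0$ and is therefore also encoded by a path from $u'$. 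Arc blocking depends only on $\xvec^L$, so the same inclusion holds inside the restricted diagram, yielding $\hat{\pi}_{u'} \geq \hat{\pi}_v$; chaining through the 0-arc then gives $\hat{\pi}_u \geq 0 + \hat{\pi}_{u'} \geq \hat{\pi}_v$, as required.

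The main obstacle is formalizing the completion-containment step within the reduced-diagram formalism: nodes $v$ and $u'$ represent distinct equivalence classes of prefixes, so one must confirm that reduction does not destroy the correspondence between paths from $v$ and paths from $u'$ carrying identical suffix assignments, and that arc lengths (which depend only on the variable value and layer) agree under that correspondence. Once this is settled, the monotonicity of $\hat{\pivec}$ is established, Proposition \ref{prop:optSolFollower} certifies $M_a = \len{a}$, and the validity of \ref{model:mipbilevel} follows.
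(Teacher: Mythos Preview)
Your proposal is correct and follows essentially the same route as the paper: remove the blocked $1$-arcs from $\dd^F$, take longest $u$-to-$\terminalnode$ path lengths in the restricted diagram as the dual potentials $\hat{\pi}_u$, set $\hat{\gamma}_a=\len{a}$ on blocked arcs and $0$ elsewhere, and then establish the monotonicity $\hat{\pi}_{\tail{a}}\ge\hat{\pi}_{\head{a}}$ via the completion-containment argument that every suffix from $\head{a}$ (the $1$-arc target) is also a suffix from the $0$-arc target, using $\Amat^F\ge 0$. The one small point the paper makes explicit and you omit is a preliminary without-loss-of-generality reduction to $\cost^F_j\ge 0$ (possible since $\Amat^F\ge 0$ forces any optimal follower to set negatively weighted variables to zero), which is needed so that $\hat{\gamma}_a=\len{a}\ge 0$; otherwise your constructed dual would violate the sign constraint on $\gammavec$.
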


Finally, Proposition \ref{prop:generalBigM} presents a general way of obtaining $M$-values for \ref{model:mipbilevel}, for settings where the non-negative assumption of Proposition \ref{prop:coolBigM} is violated.

\begin{proposition}
    \label{prop:generalBigM}
The following are valid values for the big-M constants in model \eqref{model:mipbilevel}:
\begin{equation}
M_a =  \len{a} + \sum_{j = 1}^n \max\{\cost_{j}^F , 0\} - \sum_{j = 1}^n \min\{\cost_{j}^F , 0\}, \quad \forall a\in\A.    
\end{equation}
\end{proposition}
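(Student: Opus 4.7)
My plan is to invoke Proposition \ref{prop:optSolFollower}, which reduces the task to showing that the proposed $M_a$ satisfies condition \eqref{bigMCondition} for some canonical choice of dual variables $\hat{\pi}$ arising from a feasible solution of \eqref{follow:ref1.1}--\eqref{follow:refVars}. In analogy with the proof of Proposition \ref{prop:coolBigM}, I would fix a feasible leader decision $x^L$ and form the ``blocked'' subdiagram $\hat{\dd}$ obtained from $\dd^F$ by deleting every arc with $x^L_{\layer{a}} = 1$ and $\val{a} = 1$. By the relatively complete recourse assumption, $\hat{\dd}$ still admits at least one $\rootnode$-to-$\terminalnode$ path, and the follower's optimal response corresponds to the longest such path.

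Next, I would define the canonical potentials $\hat{\pi}_u$ as the longest $u$-to-$\terminalnode$ path length in $\hat{\dd}$ (with $\hat{\pi}_{\terminalnode}=0$), using the safe fallback value $\hat{\pi}_u = \sum_{j=1}^n \min\{c_j^F, 0\}$ for nodes disconnected from $\terminalnode$ in $\hat{\dd}$. Checking that these potentials, together with slack variables $\hat{\gamma}_a = M_a$ on blocked arcs and $\hat{\gamma}_a = 0$ elsewhere, satisfy primal feasibility \eqref{follow:ref1.1}--\eqref{follow:ref1.3}, dual feasibility \eqref{follow:ref2}--\eqref{follow:ref2.5}, and strong duality \eqref{follow:ref3} is routine: the longest-path property gives $\hat{\pi}_{\tail{a}} \ge \len{a} + \hat{\pi}_{\head{a}}$ on every non-blocked arc, and each blocked arc is shielded from \eqref{follow:ref3} by the factor $(1 - x^L_{\layer{a}}) = 0$.

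The core step is a uniform two-sided bound on $\hat{\pi}_u$. Because any $\rootnode$-$\terminalnode$ path in $\dd^F$ selects exactly one arc per layer with length drawn from $\{0, c_{\layer{a}}^F\}$ and no variable index is repeated, the length of any such path lies in the interval $\bigl[\sum_{j=1}^n \min\{c_j^F, 0\},\; \sum_{j=1}^n \max\{c_j^F, 0\}\bigr]$. The longest-path values $\hat{\pi}_u$ at connected nodes inherit the same interval, and the fallback for disconnected nodes was chosen inside it by design. Consequently, for every arc $a$,
\begin{align*}
\len{a} + \hat{\pi}_{\head{a}} - \hat{\pi}_{\tail{a}} \;\le\; \len{a} + \sum_{j=1}^n \max\{c_j^F, 0\} - \sum_{j=1}^n \min\{c_j^F, 0\} \;=\; M_a,
\end{align*}
which is exactly the bound required by Proposition \ref{prop:optSolFollower}.

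The main technical obstacle is establishing a consistent definition of $\hat{\pi}$ at nodes disconnected from $\terminalnode$ in $\hat{\dd}$, since the natural longest-path definition no longer applies there; the fallback value must be low enough to respect the ingoing dual feasibility inequalities yet high enough that the outgoing (necessarily blocked) arcs can be absorbed by $\hat{\gamma}_a = M_a$. Once the potential is shown to lie uniformly inside $\bigl[\sum_j \min\{c_j^F, 0\},\; \sum_j \max\{c_j^F, 0\}\bigr]$ across all nodes, the bound above follows mechanically and the reformulation remains valid with the proposed constants.
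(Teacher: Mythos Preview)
Your approach is essentially the same as the paper's: both invoke Proposition \ref{prop:optSolFollower}, interpret the dual potentials $\hat{\pi}_u$ as longest-path values in the (blocked) diagram, bound them uniformly in $\bigl[\sum_j \min\{c_j^F,0\},\;\sum_j \max\{c_j^F,0\}\bigr]$, and read off the required inequality $M_a \ge \len{a}+\hat{\pi}_{\head{a}}-\hat{\pi}_{\tail{a}}$. The paper's proof is terser and does not explicitly build $\hat{\dd}$, but the core argument is identical; your version is somewhat more detailed in that you flag the disconnected-node case as the main obstacle, which the paper handles only loosely (``or some arbitrary value otherwise'').

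One small caution: your parenthetical claim that outgoing arcs from a disconnected node are ``necessarily blocked'' is not correct in general---a disconnected node can have a non-blocked zero-arc (or even a non-blocked one-arc) to another disconnected node. This means the fallback value $\sum_j \min\{c_j^F,0\}$ must also be checked against \eqref{follow:ref2}--\eqref{follow:ref2.5} on such non-blocked outgoing arcs, not only on blocked ones. The paper does not address this more carefully either, so your proposal is no less rigorous than the paper's own argument, but you should not rely on that parenthetical when filling in the details.
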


\subsection{Case Study: Competitive Project Selection}
\label{sec:casestudy}

We present next a case study in competitive project selection. In this setting, two competing firms select projects to execute from a shared pool in a sequential manner, starting with the leader. Both firms seek to maximize their own profit, which is a function of both the projects as well as the competitor's selection. Applications of this setting appear in Marketing, where a company chooses advertisement actions that are also impacted by the actions of their competitors  \citep{denegre2011interdiction}.

Let $\projs = \{1,\dots,n\}$ be a set of $n$ projects in a pool shared by two firms. The leader firm considers both the net profit of projects executed, given by $c^L_j$, and a penalty for projects executed by the follower, denoted by $d^L_j$. The follower firm considers only the net profit of their executed projects, given by $c^F_j$. Similarly, the project costs and the firm budgets are $a^L_j,a^F_j$ and $b^L,b^F$, respectively. We write the competitive project selection problem as the following program
\begin{subequations}
\begin{align}
    \label{model:cpsp}
    \tag{CPSP}
    \max_{\xvec^L, \xvec^F} 
        &\quad
            \sum_{j \in \projs} 
            \left(
                c^L_j x^L_j - d^L_j x^F_j 
            \right)\\
    \textnormal{s.t.}
        &\quad
            \sum_{j \in \projs} a^L_j x^L_j \le b^L, \label{cpsp:budgetL} \\
        &\quad 
            \xvec^F \in \argmax_{ \overline{\xvec}^F }
            \left\{
                \sum_{j \in \projs} c^F_j \overline{x}^F_j 
                \colon 
                \sum_{j \in \projs} a^F_j \overline{x}^F_j \le b^F,
                \overline{\xvec}^F \le \mathbf{1} - \xvec^L, \;\;
                \overline{\xvec}^F \in \{0,1\}^n
            \right\}, \label{cpsp:follower} \\
        &\quad 
        \xvec^L \in \{0,1\}^n.
    \end{align}
\end{subequations}
The objective of \ref{model:cpsp} maximizes the leader's profit penalized by the follower's actions. The knapsack inequality \eqref{cpsp:budgetL} enforces the leader's budget. The follower's subproblem is represented by \eqref{cpsp:follower} and maximizes profit subject to an analogous budget constraint. Moreover, the constraint $\overline{\xvec}^F \le \mathbf{1} - \xvec^L$ imposes that only projects not selected by the leader can be picked. We remark that this problem is not a zero-sum game since the leader's objective does not necessarily minimize the follower's profit. Thus, \ref{model:cpsp} is not a knapsack interdiction problem but models a slightly more general setting in which the player's motivations are not in direct conflict.    

\subsubsection{Single-Level Reformulation.}

We construct $\dd^F = (\N,\A,\valvec,\lenvec)$ based on a standard DD formulation for the knapsack problem for which the state variable stores the amount of budget used after deciding if project $j \in \projs$ is selected or not (see e.g., \citet{bergman2016}). We generate $\dd^F$ from the state transition graph of the DP described above by removing nodes corresponding to the infeasible state and merging all terminal states. 

\begin{example}
    \label{ex:ddProjects}
Figure \ref{fig:ProjectSelection} shows a decision diagram generated for a follower's problem with $|\projs| =3$ having $a^F_1=a^F_2=2$, $a^F_3=4$, and $b^F = 5$. Solid lines represent yes-arcs, dashed lines represent no-arcs, and contributions to the follower's profit are shown alongside the yes-arcs.    
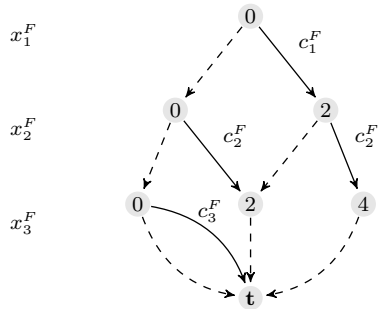
\begin{figure}[H]
	\begin{center}
	\usetikzlibrary{arrows}
	\tikzstyle{shaded} = [draw,line width=4pt,-,black!12]
	\tikzstyle{main node} = [circle,fill=gray!50,font=\scriptsize, inner sep=1pt]      \tikzstyle{text node} = [font=\scriptsize]
    \tikzstyle{arc text} = [font=\scriptsize]
	\begin{tikzpicture}[->,>=stealth',shorten >=1pt,auto,node distance=1cm,
                thick] 
	
	\node[text node] (r) at (0,0) {$\quad$};  
	\node[text node] (x^f_1) at (0,-0.25) {$x^F_1$};
	\node[text node] (x^f_2) at (0,-1.5) {$x^F_2$};
	\node[text node] (x^f_3) at (0,-2.75) {$x^F_3$};
	\node (type) at (0,-4) {$\;$};
	 \end{tikzpicture}
   \hspace{2em}
\begin{tikzpicture}[->,>=stealth',shorten >=1pt,auto,node distance=1cm,
                thick] 
\tikzstyle{node label} = [circle,fill=gray!20,font=\scriptsize, inner sep=1pt]
	\node [node label] (1) at (0,0) {$0$};
	\node [node label] (2) at (-1,-1.25) {$0$};
	\node [node label] (3) at (1,-1.25) {$2$};
	\node [node label] (4) at (-1.5,-2.5) {$0$};
	\node [node label] (5) at (0,-2.5) {$2$};
	\node [node label] (6) at (1.5,-2.5) {$4$};
	\node [node label] (t) at (0,-3.75) {$\sink$};
	\node (type) at (0,-4) {};

	\path[->](1) edge[one arc] node [above right, arc text] {$c^F_1$} (3);
	\path[->](1) edge [zero arc] node [right] {} (2);
	\path[->](2) edge[one arc] node [above right, arc text] {$c^F_2$} (5);
	\path[->](2) edge [zero arc] node [right] {} (4);
	\path[->](3) edge[zero arc] node [left] {} (5);
	\path[->](3) edge[one arc]  node [above right, arc text] {$c^F_2$} (6);

	\path[->](4) edge[one arc] [bend left] node [above, arc text] {$c^F_3$} (t);
	\path[->](4) edge [bend right, zero arc] node [below left] {} (t);
	\path[->](5) edge [zero arc] node [right] {} (t);
	\path[->](6) edge [bend left, zero arc] node [below left] {} (t);

	\end{tikzpicture}
	\end{center}
	\caption{A decision diagram corresponding to a follower's problem with 3 projects.}
	\label{fig:ProjectSelection}
\end{figure}
Each layer of the diagram corresponds to a follower's variable. There are 5 paths in the diagram that encode all the follower selections of projects that satisfy the budget constraint. \hfill $\square$
\end{example}

Following the approach described in \S\ref{sec:leaderRemodel}, we reformulate \ref{model:cpsp} over $\dd^F$ as the single-level MILP:
\begin{subequations}
\begin{align}
    \label{model:mipbilevelProjectSelection} \tag{CPSP-D}
    \max_{\xvec^L, \xvec^F, \yvec, \pivec, \gammavec} 
        &\quad
        \sum_{j \in \projs} 
                c^L_j x^L_j - d^L_j x^F_j \\
    \textnormal{s.t.}
        &\quad
            \sum_{j \in \projs} a^L_j x^L_j \le b^L, \label{cpspR:budgetL} \\
        &\quad 
            \eqref{mipdp:primalDual} - \eqref{mipdp:vars}. \label{cpspR:Reformulation} 
\end{align}
\end{subequations}
The objective function is the same as in the original formulation. Constraint \eqref{cpspR:budgetL} corresponds to the leader's budget constraint. Constraints \eqref{cpspR:Reformulation} remain exactly as defined in Section \ref{sec:leaderRemodel}, ensuring that $\yvec$ corresponds to a feasible path in $\dd^F$ (primal solution), that $(\pivec, \gammavec)$ is a feasible dual solution,  enforcing the relationship between flow-variables $\yvec$ and follower variables $\xvec^F$, and ensuring that $\yvec$ and $(\pivec, \gammavec)$ satisfy strong duality. Additionally, since all the project costs are nonnegative, we set $M_a = c^F_{\layer{a}}$ according to Proposition \ref{prop:coolBigM}.

\subsubsection{Numerical Analysis.}
\label{sec:bilevelnumericalanalysis} We compare \ref{model:mipbilevelProjectSelection}, dubbed DDR for decision-diagram reformulation, against an state-of-the-art approach for linear bilevel problems by \cite{fischetti2017new} based on branch and cut, denoted here by B\&C. We also experimented with the \emph{MibS} branch and cut approach by \cite{tahernejad2020branch} and found that B\&C performed much better for our instance set and thus decided to focus our comparison with B\&C. Our experiments use the original B\&C code provided by the authors. For consistency, DDR was implemented using the same solver as B\&C (ILOG CPLEX 12.7.1). All runs consider a time limit of one hour (3,600 seconds). We note that, while B\&C is based on multiple classes of valid inequalities and separation procedures, the DDR is a stand-alone model. We focus on a graphical description of the results in this sections; detailed tables and additional results are included in Appendix \ref{sec:Tables}. Source code will be available at the Github repository.

We generated instances of \ref{model:cpsp} based on two parameters: the number of items $n$, and the right-hand side tightness $t=\frac{b^L}{ \sum_{j \in \projs} a^L_j} = \frac{b^F}{ \sum_{j \in \projs} a^F_j}$, i.e., smaller values of $t$ correspond to instances with fewer feasible solutions. We considered five random instances for each $n \in \{30,40,50\}$ and $t \in \{0.10, 0.15, 0.20, 0.25\}$, generating 60 problems in total. Instances for each parameter configuration are generated as follows.  The coefficients $a^{L}$ are drawn uniformly at random from a discrete uniform distribution 
$\textnormal{U}(1,25)$ and then we set $a^{F} = a^{L}$. Budgets $b^{L} = b^{F}$ are set according to $t$. The project profits are proportional to their costs by setting $c^{L}_i = 5a^{L}_i+\xi^L_i$ and $c^{F}_i = 5a^{F}_i+\xi^F_i$, where $\xi$-values are drawn independently and uniformly at random from a discrete uniform distribution $\textnormal{U}(1,10)$.
\begin{figure}[t]
	\centering
	\subfloat[Tightness]{
		\includegraphics[scale=0.45]{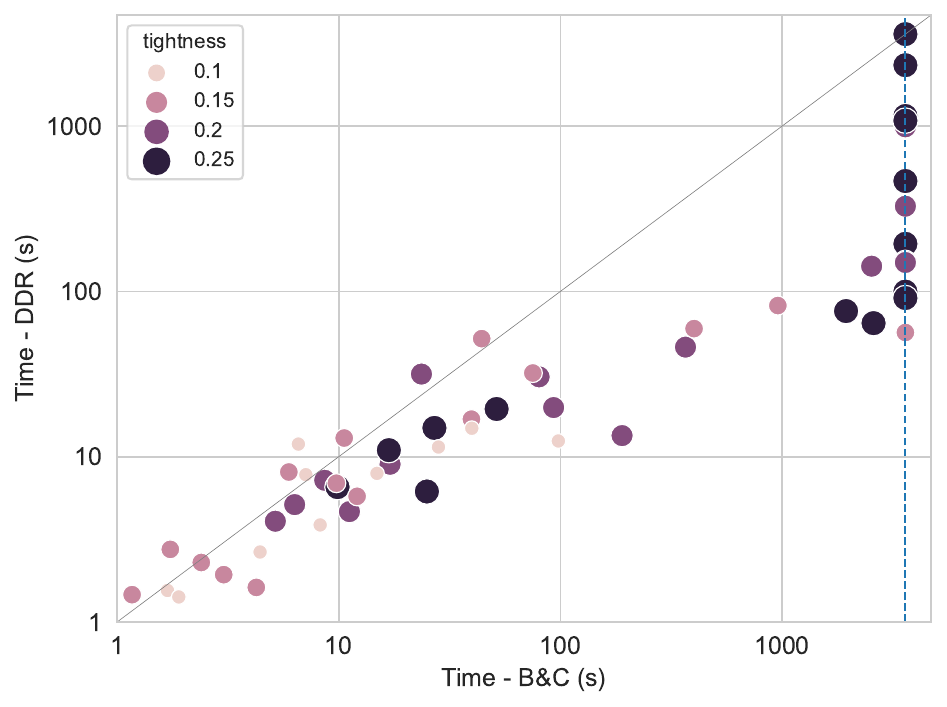}
		\label{fig:tightnessCPSP}
	}%
	\;
	\subfloat[Number of variables]{
		\includegraphics[scale=0.49]{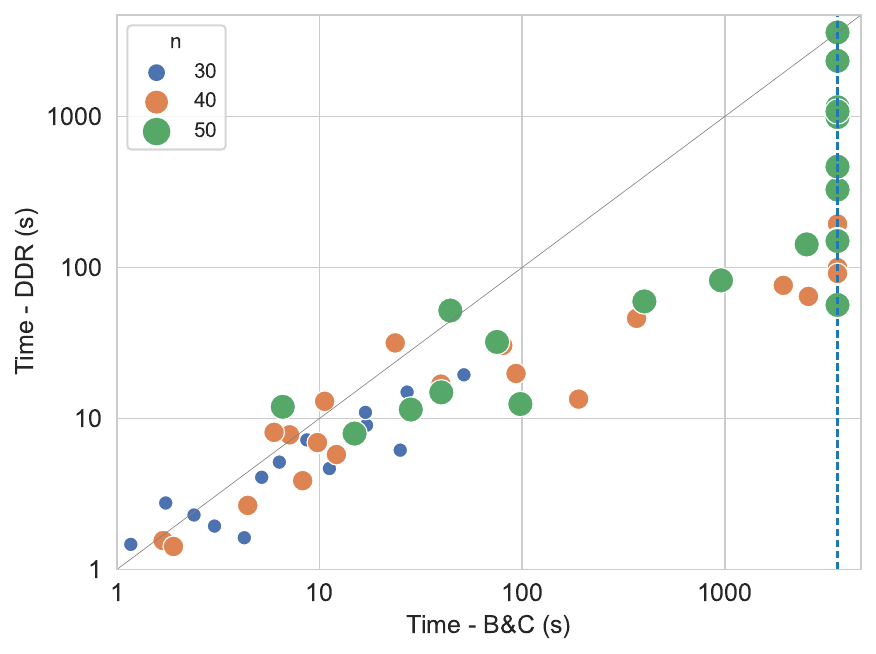}
		\label{fig:nCPSP}
	}%
\caption{(Coloured) Scatter plots comparing runtimes between B\&C and DDR based on the constraint tightness and the number of variables. Dashed lines (in blue) represent the time-limit mark at 3,600 seconds. Both horizontal and vertical coordinates are in logarithmic scale.}
\label{fig:scatterTimesSmall}
\end{figure}

Figure \ref{fig:scatterTimesSmall} depict runtimes through scatter plots that highlight instance tightness (Figure \ref{fig:tightnessCPSP}) and the number of variables $n$ (Figure \ref{fig:nCPSP}). Dashed lines (blue) mark the time-limit coordinate at 3,600 seconds. Runtimes for DDR also include the DD construction. In total, B\&C and DDR solve 47 and 59 instances out of 60, respectively. The runtime for instances solved by B\&C is on average 208 seconds with a high variance (standard deviation of 592.5 seconds), while the runtime for DDR for the same instances is 18 seconds with a standard deviation of 27.5 seconds. 

Figure \ref{fig:tightnessCPSP} suggests that the performance is equivalent for small tightness values (0.1 and 0.15), which represent settings where only few solutions are feasible. However, as tightness increases (0.2 and 0.25), the problem becomes significantly more difficult for both solvers. In particular, while the size of DDR also increases with $t$ (since DDs would represent a larger number of solutions), the model still scales more effectively; instances with tightness of 0.20 and 0.25 are solved in 260.1 seconds on average by DDR. This is at least one order of magnitude faster than B\&C, which could not solve the majority of instances with $t\geq 0.2$ within the time limit. We note that an analogous analysis follows for Figure \ref{fig:nCPSP}, as larger values of $n$ increase the difficulty of the problem for both methods and the same scaling benefits are observed.

Based on these results, we generate two new classes of challenging instances to assess at which point DDR would still be sufficiently compact to provide benefits with respect to B\&C. To this end, the first and second class draw the knapsack coefficients uniformly at random from the discrete uniform distributions $\textnormal{U}(1,50)$ and $\textnormal{U}(1,100)$, respectively, as opposed to $\textnormal{U}(1,25)$. Larger domain distributions significantly increase the DD sizes, since the number of nodes per layers is bounded above by such numbers (we refer to Appendix \ref{sec:Tables} for the final DD sizes). We also vary the tightness within the set $\{0.1, \dots, 0.9\}$ which also impacts the total number of nodes as discussed above. The number of variables is fixed to $n=30$, generating in total 45 instances for each new class with five random instances per configuration.

Figures \ref{fig:performanceU50} and \ref{fig:performanceU100} depict the aggregated results for distributions $\textnormal{U}(1,50)$ and $\textnormal{U}(1,100)$, respectively. Each plot is a performance profile that measures the number of instances solved up to each runtime. We observe that, while instances are more challenging due to larger DD sizes, DDR still solves the 45 problems within the time limit for $\textnormal{U}(1,50)$ and is significantly faster (often by orders of magnitude) than B\&C. For $\textnormal{U}(1,100)$, B\&C and DDR are comparable, indicating the DD size threshold at which DDRs are still beneficial for this problem class. Surprisingly, we observe that for parameters where DDR could potentially suffer from scalability issues (i.e., larger tightness and distribution domains), the resulting problem is also challenging to B\&C, even with a relatively small number of variables $n$. 
\begin{figure}[H]
	\centering
	\subfloat[U(1,50)]{
		\includegraphics[scale=0.49]{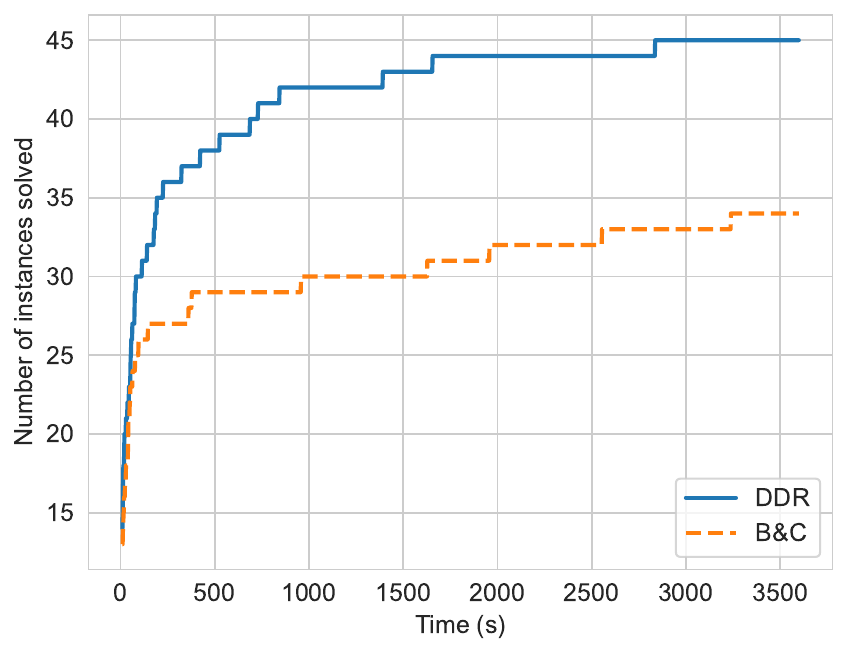}
		\label{fig:performanceU50}
	}%
	\;
	\subfloat[U(1,100)]{
		\includegraphics[scale=0.49]{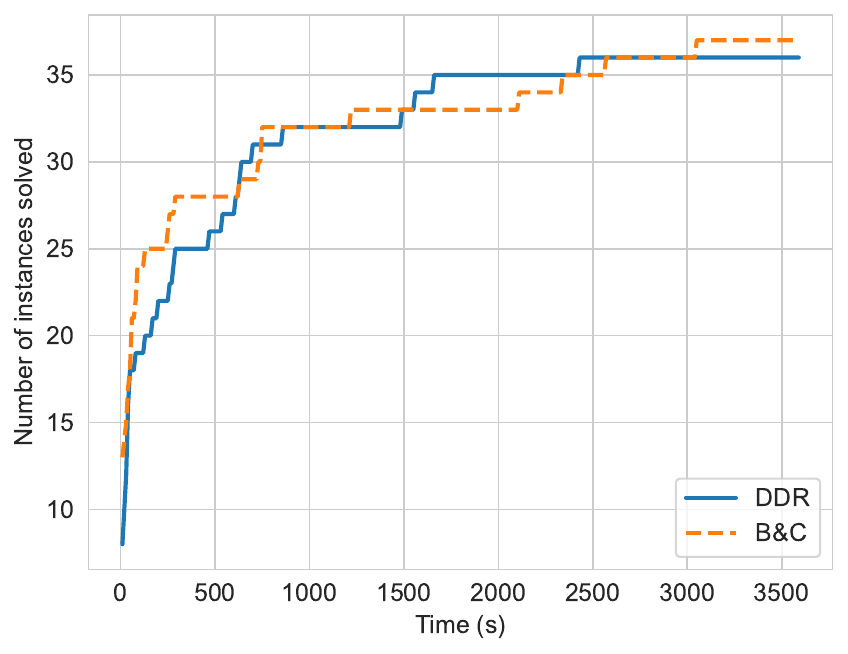}
		\label{fig:performanceU100}
	}%
\caption{(Coloured) Performance profile plot for varying uniform discrete distribution of the knapsack constraint coefficients $a^{F}$ and $a^{L}$.}
\label{fig:performanceLarge}
\end{figure} 

\subsection{Generalizations}
\label{sec:bilevelgeneralizations}
We end this section by discussing extensions of our approach to more general problem settings. Consider a problem setting given by
\begin{subequations} \label{eq:BIPG}
\begin{align}
    \max_{\xvec^L, \xvec^F}
        &\quad
        g^L(\xvec^L, \xvec^F)
        \\
    \textnormal{s.t.}
    &\quad
        H^L(\xvec^L, \xvec^F) \le 0 \;\; 
    \\ \label{BIPG1}
    &\quad
        \xvec^F \in \argmax_{\overline{\xvec}^F} 
        \left\{ 
            g^F(\overline{\xvec}^F)
            \colon            
            H^F(\overline{\xvec}^F) \le 0, \;\;
            \overline{x}^F_i \le 1-\mu_i(\xvec^L), \; \forall i=1,\dots,n , \;\;
            \overline{\xvec}^F \in \{0,1\}^n
        \right\}
        \\ \label{BIPG2}
        &\quad \xvec^L \in \{0,1\}^n,
\end{align}
\end{subequations}
where $g^L$, $H^L:\{0,1\}^{\numvars \times \numvars} \rightarrow \mathbb{R}$ are (possibly nonconvex) functions defined over $\{0,1\}^{\numvars \times \numvars}$; $g^F$, $H^F:\{0,1\}^{\numvars} \rightarrow \mathbb{R}$ are (possibly nonconvex) functions defined over $\{0,1\}^{\numvars}$, and $ \mu_{i}:\{0,1\}^{\numvars} \rightarrow \set{0,1}$ for all $i \in \set{1, \dots, n}$. As before, we assume the relatively complete recourse property and that all functions are well-defined over their domains.   

Consider a decision diagram $\dd^F = (\N,\A,\valvec,\lenvec)$ encoding the discrete feasible set:
\begin{align}
    \feasibleSet = 
    \{
        \xvec \in \{0, 1\}^n 
        \colon
        H^F(\xvec) \le 0
    \} 
\end{align}
and objective $f(\xvec) = g^F(\xvec)$, i.e., paths in $\dd^F$ correspond to follower solutions that satisfy $H^F(\xvec) \le 0$ and path lengths correspond to exact evaluations of function $g^F$. The reformulation procedure and results in Propositions \ref{prop:followerDD} -- \ref{prop:generalBigM} can be extended for the more general setting above by defining the side constraints as
\begin{align}
    y_a \le 1 - \mu_{\layer{a}}(\xvec^L), \;\; \forall a \in \A \colon \val{a} = 1, \label{csp:followerG}
\end{align} 
because the methodology does not require the leader or follower problem to be linear. 
Thus, a single-level reformulation of \eqref{eq:BIPG} is given by
\begin{subequations} \label{BIPGLinear}
\begin{align}
    \label{model:mipbilevelG}
    \tag{MILP-DB}
    \max_{\xvec^L, \xvec^F, \yvec, \pivec, \gammavec} 
        &\quad g^L(\xvec^L, \xvec^F)
        \\
    \textnormal{s.t.}
        &\quad
            H^L(\xvec^L, \xvec^F) \le 0, \\
        &\quad 
             \eqref{follow:ref1.1}-\eqref{follow:ref1.2}, \eqref{follow:ref2}-\eqref{follow:ref2.5}, \eqref{follow:ref4}-\eqref{follow:refVars} \label{mipdp:primalDualG}, \\
        &\quad
           y_a \le 1 - \mu_{\layer{a}}(\xvec^L), \;\; \forall a \in \A \colon \val{a} = 1, \\     
        &\quad 
            \lenvec^{\top} \yvec - \pi_{\rootnode} - \sum_{a \in \A \colon \val{a} = 1 } \gamma_a + \sum_{a \in \A \colon \val{a} = 1 } M_{a} \mu_{\layer{a}}(\xvec^L) = 0,  \label{mipdp:strongdualityG} \\
         &\quad 
            \gamma_a - M_{a} \mu_{\layer{a}}(\xvec^L) \ge 0, &\forall a \in \A \colon \val{a} = 1, \label{mipdp:consistencyG} \\    
        &\quad
            \xvec^L \in \{0,1\}^n.
\end{align}
\end{subequations}
We remark that \eqref{BIPGLinear} is a single-level problem that conserves the original functions $g^L$, $H^L$, and $\mu_{i}$ while replacing functions $g^F$ and $H^F$ with a series of linear constraints based on the variables of a primal flow problem over $\dd^F$ and the corresponding dual variables.


\section{Discrete Robust Optimization}
\label{sec:robustoptimization}

In this section, we introduce a \ref{model:csp} reformulation to robust problems of the form
\begin{align}
\label{model:robust}
\tag{RO}
\min_{\xvec \in \feasibleSet} 
    &\quad
    \obj(\xvec) \\
\textnormal{s.t.}
    &\quad
        \Amat(\delta) \, \xvec \le \bvec(\delta),  &&\forall \delta \in \Uset, \label{robust:subset}
\end{align}
where $\feasibleSet$ is a discrete set as in \ref{model:discretep}, 
$\delta$ are realizations of a (possibly infinite) uncertainty set $\Uset$,
and $\Amat(\delta) \in \mathbb{R}^{r \times n}$, $\bvec(\delta) \in \mathbb{R}^r$ are the coefficient matrix and the right-hand side, respectively, for the realization $\delta \in \Uset$ for some $r > 0$. As discussed in \S\ref{sec:related},
state-of-the-art algorithms for \ref{model:robust} first relax the model by considering only a subset of $\Uset$ in \eqref{robust:subset}, iteratively adding violated constraints from missing realizations until convergence. 

We propose an alternative algorithm for \ref{model:robust} that is applicable when the set $\feasibleSet$ is amenable to a DD encoding. The potential benefit of the methodology is that it is combinatorial and exploits the network structure of $\dd$ for scalability purposes. Our required assumptions are the existence of a separation oracle $\sep(\xvec)$ that identifies violations of \eqref{robust:subset}, i.e.,
\begin{align}
    \sep(\xvec) = 
        \left \{ 
            \begin{array}{ll}
            (i,\delta), & \textnormal{for any $(i,\delta) \in \{1,\dots,r\} \times \Uset$ such that $\avec_i(\delta) \xvec > b_i(\delta)$}, \\
            \emptyset, & \textnormal{if no such pair exists,}
            \end{array}
        \right.
\end{align}
and that such oracle returns either a pair $(i,\delta)$ or $\emptyset$ in finite computational time. This is a mild assumption in existing \ref{model:robust} formulations and holds in typical uncertainty sets, such as when $\Uset$ is an interval domain or, more generally, has a polyhedral description. For instance, if $\Uset$ is finite (e.g., derived from a sampling process), then the simplest $\sep(\xvec)$ enumerates each $\delta$ separately. 

We start in \S\ref{sec:reformulationRobust} with a reformulation of the robust problem as a \ref{model:csp}, leveraging this time the dynamic programming perspective presented in \S\ref{sec:functionalperspective}. Next, we discuss a state-augmenting procedure to address the resulting model in \S\ref{sec:stateAugmenting}, and perform a numerical study on a robust variant of the traveling salesperson problem with time windows (TSPTW) in \S\ref{sec:robustTSPW}. 

\subsection{Reformulation of \ref{model:robust}}
\label{sec:reformulationRobust}
 
Let $\dd^R = (\N,\A,\valvec,\lenvec)$ encode the discrete optimization problem \ref{model:discretep} with feasible set $\feasibleSet$ and objective $\obj(\xvec)$. The constrained longest-path reformulation is obtained directly by representing \eqref{robust:subset} as side constraints over the arc space of $\dd^R$, which we formalize in Proposition \ref{prop:robustReformulation}.

\begin{proposition}
    \label{prop:robustReformulation}
    For each arc $a$ of $\dd^R$, $\delta \in \Uset$, and $i \in \{1,\dots,r\}$, define the scalars 
    \begin{align}
        &g_{i,a}(\delta) = \val{a} \, a_{i, \layer{a}}(\delta),
    \end{align}
    where $a_{i,\layer{a}}(\delta)$ is the $(i, \layer{a})$-th element of $\Amat(\delta)$. Let 
    $\Gmat(\delta) = \{ g_{i,a}(\delta) \}_{\forall i,a}$ be the matrix composed of such scalars for each $\delta$.  There exists a one-to-one mapping between solutions $\xvec$ of \ref{model:robust} and paths of \ref{model:csp} defined over $\dd^R$ and with side constraints
    \begin{align}
        \label{ro:sidecsp}
        \Gmat(\delta) \,\yvec \le \bvec(\delta), && \forall \delta \in \Uset.
    \end{align}
\end{proposition}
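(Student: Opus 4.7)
The plan is to combine the built-in bijection between $\rootnode$-$\terminalnode$ paths in $\dd^R$ and feasible solutions $\xvec \in \feasibleSet$ (recalled in \S\ref{sec:networkDDs}) with a direct algebraic identification of the side-constraint system \eqref{ro:sidecsp} with the family of inequalities \eqref{robust:subset}. Since $\dd^R$ already encodes $\feasibleSet$ and $\obj$, what is left to show is that for every $\delta \in \Uset$ the scenario inequality $\Amat(\delta)\xvec \le \bvec(\delta)$ translates, row by row, into the arc-flow inequality $\Gmat(\delta)\yvec \le \bvec(\delta)$, and vice versa.

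First, I would fix $\xvec \in \feasibleSet$ and let $(a_1,\ldots,a_n)$ be the unique $\rootnode$-$\terminalnode$ path in $\dd^R$ associated with $\xvec$, with $\tail{a_j} \in \N_j$ and $x_j = \val{a_j}$ for $j=1,\ldots,n$. Let $\yvec \in \{0,1\}^{|\A|}$ be the incidence vector of this path, so that $y_a = 1$ iff $a \in \{a_1,\ldots,a_n\}$. Because exactly one arc of the path lies in each layer, for any $\delta \in \Uset$ and any row $i \in \{1,\ldots,r\}$,
\begin{align*}
    \sum_{a \in \A} g_{i,a}(\delta)\, y_a
        &= \sum_{j=1}^n g_{i,a_j}(\delta)
        = \sum_{j=1}^n \val{a_j}\, a_{i,\layer{a_j}}(\delta)
        = \sum_{j=1}^n x_j\, a_{i,j}(\delta)
        = \bigl(\Amat(\delta)\xvec\bigr)_i.
\end{align*}
Thus $\Gmat(\delta)\yvec \le \bvec(\delta)$ if and only if $\Amat(\delta)\xvec \le \bvec(\delta)$, for every $\delta \in \Uset$ simultaneously.

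Conversely, given any $\yvec \in \{0,1\}^{|\A|}$ that is an $\rootnode$-$\terminalnode$ path in $\dd^R$ and that satisfies \eqref{ro:sidecsp}, the decision-diagram bijection yields a unique $\xvec \in \feasibleSet$ with $x_j = \val{a_j}$ on the arcs of the path; the same computation above shows that $\xvec$ satisfies \eqref{robust:subset}. Combining both directions gives the claimed one-to-one correspondence.

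There is no real obstacle here: the statement is essentially a bookkeeping exercise that pushes scenario constraints through the arc-flow encoding. The only subtle point worth stating explicitly in the write-up is that the identity $\sum_{a} g_{i,a}(\delta) y_a = (\Amat(\delta)\xvec)_i$ relies on the layered structure of $\dd^R$ ensuring that a path selects exactly one arc per variable index $j = \layer{a}$, so that the contribution $\val{a}\,a_{i,\layer{a}}(\delta)$ of each selected arc aggregates precisely to the term $x_j\, a_{i,j}(\delta)$ of the original constraint.
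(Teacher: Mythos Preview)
Your proof is correct and follows essentially the same route as the paper's own argument: both use the built-in bijection between $\rootnode$--$\terminalnode$ paths in $\dd^R$ and points of $\feasibleSet$, then verify the algebraic identity $\sum_{a \in \A} g_{i,a}(\delta)\,y_a = (\Amat(\delta)\xvec)_i$ via the layered structure to conclude that \eqref{ro:sidecsp} and \eqref{robust:subset} are equivalent under this bijection. Your write-up is in fact slightly more explicit than the paper's about why the layered structure guarantees exactly one arc per variable index, which is a helpful clarification.
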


Modeling such a reformulation as \ref{model:cspmilp} often results in a binary mathematical program with exponential or potentially infinite many constraints. However, it may be tractable in the presence of special structure. If inequalities \eqref{ro:sidecsp} preserve the totally unimodularity of the network matrix in \eqref{SP1}-\eqref{SP2}, and are separable in polynomial time, then \ref{model:cspmilp} is solvable in polynomial time in $\dd^R$ via the Ellipsoid method \citep{grotschel2012geometric}. Otherwise, computational approaches often rely on decomposition methods that iteratively add variables and constraints to the MILP.

\subsection{State-augmenting Algorithm}
\label{sec:stateAugmenting}

We propose a state-augmenting approach where each iteration is a traditional combinatorial constrained shortest-path problem and is amenable, e.g., to combinatorial methods such as the pulse method \citep{cabrera2020}. Our solution is based on recursive model \ref{model:cspdp} as applied to \ref{model:robust}. More precisely, we solve the recursion 
\begin{align}
    \Vfun_u(\dot \svec) = 
    \left\{
    \begin{array}{ll}
        \min_{a \in \outarcs{u} \colon \svec + \gvec_a \le \hvec} \left \{ \len{a} + V_{\head{a}}(\dot \svec + \gvec_a)  \right\},
            & \textnormal{if $u \neq \terminalnode$,}\\
        0,
            & \textnormal{otherwise.}
    \end{array}    
    \right.
\end{align}
with state space defined by the elements $\dot \svec = ( \; \svec(\delta) \; )_{\forall \delta \in \Uset},$
where each $\svec(\delta) \in \mathbb{R}^r$ is the state vector encoding the $\delta$-th inequalities $\Gmat(\delta) \,\yvec \le \bvec(\delta)$ of \eqref{ro:sidecsp}. 

We recall that, if solved via the shortest-path representation \ref{model:dplp}, an optimal solution corresponds to a sequence $(\dot \svec_1, a_1), \dots (\dot \svec_n, a_n)$ where $(a_1, \dots, a_n)$ is an $\rootnode-\terminalnode$ path in $\dd^R$ and $\dot \svec_i \in \statespace{\tail{a_i}}$ is a reachable state at node $\tail{a_i}$, $i \in \{1,\dots, n\}$. We formalize two properties of the DP model based on this property, which build on the intuition that states conceptually play the role of constraints in recursive models. In particular, we show that only a finite set of realizations are required in the state representation to solve the Bellman equations exactly, providing a ``network counterpart'' of similar results in infinite-dimensional linear programs.

\begin{proposition}
    \label{prop:finiteU}
    Let $z(\Uset')$ be an optimal solution of \ref{model:cspdp} when the state vector $\dot \svec$ is written with respect to a subset of realizations $\Uset' \subseteq \Uset$, adjusting the dimension of $\hvec$ appropriately. Then,
    \begin{enumerate}
        \item[(a)] $z(\Uset') \le z(\Uset'')$ for any $\Uset' \subseteq \Uset'' \subseteq \Uset$.
        \item[(b)] There exists a finite subset $\Uset^* \subseteq \Uset$ such that $z(\Uset^*) = z(\Uset)$.
    \end{enumerate}       
\end{proposition}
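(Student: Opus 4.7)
The plan is to handle part (a) with a direct feasibility-inclusion argument, and part (b) with a finite witness-selection construction that exploits the fact that the decision diagram $\dd^R$ has only finitely many $\rootnode-\terminalnode$ paths.

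For (a), I would first invoke Proposition \ref{prop:DPvalidity} to interpret $z(\Uset')$ as the minimum length of an $\rootnode-\terminalnode$ path in $\dd^R$ whose arc-indicator $\yvec$ satisfies $\Gmat(\delta)\yvec \le \bvec(\delta)$ for every $\delta \in \Uset'$. Given $\Uset' \subseteq \Uset''$, any path feasible for $\Uset''$ trivially satisfies the smaller collection of inequalities indexed by $\Uset'$, so the set of paths feasible for $\Uset''$ is contained in that for $\Uset'$. Minimization over a smaller set yields a value at least as large, which gives $z(\Uset') \le z(\Uset'')$. The convention $\min_{\emptyset}\{\cdot\} = +\infty$ covers the degenerate case in which the feasible path set becomes empty.

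For (b), the central observation is that $\dd^R$ has a finite set $\mathcal{P}$ of $\rootnode-\terminalnode$ paths, which I would partition into the feasible class $\mathcal{P}^+ = \{ p \in \mathcal{P} \colon p \text{ satisfies every inequality indexed by } \delta \in \Uset \}$ and its complement $\mathcal{P}^- = \mathcal{P} \setminus \mathcal{P}^+$. By definition of $\mathcal{P}^-$, for each $p \in \mathcal{P}^-$ there is at least one realization $\delta_p \in \Uset$ whose inequality $\Gmat(\delta_p)\yvec^p \le \bvec(\delta_p)$ is violated by the arc-indicator $\yvec^p$ of $p$. I would then define $\Uset^* = \{ \delta_p \colon p \in \mathcal{P}^- \}$, which is finite because $\mathcal{P}^-$ is finite. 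To conclude $z(\Uset^*) = z(\Uset)$, I would note that the set of paths feasible under the constraints indexed by $\Uset^*$ is exactly $\mathcal{P}^+$: every $p \in \mathcal{P}^+$ remains feasible since $\Uset^* \subseteq \Uset$, while every $p \in \mathcal{P}^-$ is eliminated by its own witness $\delta_p \in \Uset^*$. Since arc lengths do not depend on $\Uset$, the minima over the two feasibility classes coincide, yielding $z(\Uset^*) = z(\Uset)$.

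The main point to be careful about is the witness selection $p \mapsto \delta_p$ when $\Uset$ is uncountable and multiple violating realizations may exist for a given path. This is benign because $\mathcal{P}^-$ is a finite index set, so a selection function exists without appeal to any nontrivial set-theoretic principle. No continuity, measurability, or limit argument is needed: the DP perspective reduces the reasoning to path enumeration over the finite graph $\dd^R$, regardless of the cardinality of $\Uset$, which is precisely the combinatorial leverage that motivates the state-augmenting approach in the following subsection.
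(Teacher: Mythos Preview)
Your proposal is correct and follows essentially the same approach as the paper: part (a) via the feasibility-inclusion (relaxation) argument, and part (b) by partitioning the finite set of $\rootnode-\terminalnode$ paths into feasible and infeasible classes and selecting one violating realization per infeasible path to form the finite $\Uset^*$. The only cosmetic difference is that the paper also cites Proposition \ref{prop:robustReformulation} to link \ref{model:cspdp} back to \ref{model:robust}, while you work directly with the arc-side constraints; the logic is identical.
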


\medskip
Proposition \ref{prop:finiteU} and its proof suggest a separation algorithm that identifies a sequence of uncertainty sets $\Uset_1 \subset \Uset_2 \subset \dots \subset \Uset_k = \Uset^*$ providing lower bounds to \ref{model:robust} in each iteration, identifying scenarios to compose $\Delta^*$. We describe it in Algorithm \ref{alg:stateaugmenting}, which resembles a Benders decomposition applied to a recursive model, i.e., the state variables of \ref{model:dplp} encode the Benders cuts and the master problem is solved in a combinatorial way $\dd$ via a constrained shortest-path algorithm. 

The algorithm starts with $\Uset_1 = \emptyset$ and finds an optimal $\rootnode-\terminalnode$ path on $\dd^R$. Notice that such path can be obtained by any shortest-path algorithm because the state space is initially empty. Next, the separation oracle $\sep(\xvec^*)$ is invoked to verify if there exists a realization $\delta$ for which inequalities \eqref{robust:subset} are violated. If that is the case, the state vector $\dot \svec$ is augmented and the problem is resolved with a constrained shortest path algorithm. Otherwise, the algorithm terminates.

\begin{algorithm}[H]
    \begin{algorithmic}[1]
        \State Let $\Uset_1 = \emptyset$, $t = 1$.
        \State Let $\xvec^* = (x_{\val{a_1}}, \dots, x_{\val{a_n}})$ be derived from an optimal $\rootnode-\terminalnode$ path $(a_1,\dots,a_n)$ in \ref{model:dplp} w.r.t. $\Uset_1$.
        \While{there exists $(i,\delta) = \sep(\xvec^*) \neq \emptyset$}
            \State $\Uset_{t+1} = \Uset_t \cup \{ \delta \}$.
            \State Resolve \ref{model:dplp} using the augmented state vector $\dot \svec$ w.r.t. $\Uset_{t+1}$.
            \State Set $\xvec^* = (x_{\val{a_1}}, \dots, x_{\val{a_n}})$ from the optimal $\rootnode-\terminalnode$ path $(a_1, \dots, a_n)$.
            \State $t := t + 1$.
        \EndWhile
        \State \textbf{return} $\xvec^*$.
    \end{algorithmic}
    \caption{State-augmenting Algorithm (Input: \ref{model:robust}, Output: Optimal $\xvec^*$)}
    \label{alg:stateaugmenting}
\end{algorithm}

In each iteration of the algorithm, $ \obj(\xvec^*)$ provides a lower bound on the optimal value of \ref{model:robust} according to Proposition \ref{prop:finiteU}. Once $\sep(\xvec^*) = \emptyset$, we obtain a feasible solution $\xvec^*$ to \ref{model:robust} and, thus, an optimal solution to the problem.    

We provide a formal proof of the convergence of the algorithm below. Specifically, the worst-case complexity of the state-augmenting algorithm is a function of both (i) the number of paths in $\dd^R$, (ii) the worst-case time complexity of the separation oracle $\sep(\xvec)$, and (iii) the worst-case complexity of the solution approach to \ref{model:dplp} to be applied. We observe that, computationally, the choice of each $\delta$ to include plays a key role in numerical performance, which we discuss in our case study of \S \ref{sec:robustTSPW}.

\begin{proposition}
    \label{prop:complexityStateAugmenting}
    Algorithm \ref{alg:stateaugmenting} terminates in a finite number of iterations.  The number of calls to the separation oracle $\sep(\cdot)$ is bounded above by the number of infeasible paths of $\dd^R$ with respect to \eqref{ro:sidecsp}.
\end{proposition}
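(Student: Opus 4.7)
The plan is to exploit the finiteness of $\rootnode$-$\terminalnode$ paths in $\dd^R$ (as already highlighted in the proof of Proposition \ref{prop:finiteU}) and show that each non-terminating iteration of the while-loop permanently removes at least one infeasible path from the set of candidate optimal paths. Combined with the monotonicity property (a) of Proposition \ref{prop:finiteU}, this immediately yields both finite termination and the stated bound on the number of oracle calls.

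First I would fix notation: let $\mathcal{P}$ denote the finite set of $\rootnode$-$\terminalnode$ paths in $\dd^R$, and partition it into $\mathcal{P}^F$ (paths whose associated $\xvec$ is feasible to \ref{model:robust}) and $\mathcal{P}^I = \mathcal{P}\setminus\mathcal{P}^F$, exactly as in the proof of Proposition \ref{prop:finiteU}. Let $\mathcal{P}(\Uset_t) \subseteq \mathcal{P}$ be the paths of $\dd^R$ that satisfy the side constraints \eqref{ro:sidecsp} restricted to realizations in $\Uset_t$. The key observation is that $\mathcal{P}(\Uset_{t+1}) \subseteq \mathcal{P}(\Uset_t)$ because $\Uset_{t+1} = \Uset_t \cup \{\delta\}$ only adds constraints (this is the combinatorial analogue of property (a) in Proposition \ref{prop:finiteU}).

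The main step is to show that the specific path $p_t \in \mathcal{P}$ returned at iteration $t$, when $\sep(\xvec^*) \neq \emptyset$, satisfies $p_t \in \mathcal{P}(\Uset_t) \setminus \mathcal{P}(\Uset_{t+1})$. By construction, the oracle returns a pair $(i,\delta)$ with $\avec_i(\delta)\xvec^* > b_i(\delta)$. Translating to arc variables via Proposition \ref{prop:robustReformulation}, this is exactly the violation of the $i$-th row of the side constraint $\Gmat(\delta)\yvec \le \bvec(\delta)$ by the indicator vector of $p_t$. Since $\delta \in \Uset_{t+1}$, we conclude $p_t \notin \mathcal{P}(\Uset_{t+1})$, while $p_t \in \mathcal{P}(\Uset_t)$ by optimality at iteration $t$. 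Consequently, at every subsequent iteration $t' > t$ we have $p_t \notin \mathcal{P}(\Uset_{t'})$, so $p_t$ can never be returned again.

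This means the mapping $t \mapsto p_t$, restricted to iterations where $\sep(\xvec^*) \neq \emptyset$, is injective. Moreover, each such $p_t$ must lie in $\mathcal{P}^I$: if $p_t$ were feasible to \ref{model:robust}, then $\sep(\xvec^*)$ would return $\emptyset$ and the loop would terminate at iteration $t$. Hence the number of oracle calls returning a nonempty violation is bounded by $|\mathcal{P}^I|$, after which one additional call confirms feasibility. The only subtle point, which I expect to be the main obstacle to write cleanly, is ensuring that the identification between the oracle's violation in $\xvec$-space and infeasibility in the extended $\yvec$-space of \ref{model:csp} is watertight; this is precisely what the arc-to-variable construction $g_{i,a}(\delta) = \val{a}\, a_{i,\layer{a}}(\delta)$ from Proposition \ref{prop:robustReformulation} provides, so I would cite that proposition explicitly at the translation step.
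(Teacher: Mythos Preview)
Your proposal is correct and follows essentially the same argument as the paper's proof: each iteration in which the oracle reports a violation identifies an infeasible path that, once the offending realization $\delta$ is added to $\Uset_{t+1}$, can never again be returned in any subsequent iteration, so finiteness of $\mathcal{P}^I$ forces termination. Your write-up is more explicit than the paper's (you spell out the injectivity of $t \mapsto p_t$ and the translation via Proposition~\ref{prop:robustReformulation}), but the underlying idea is identical.
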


\subsection{Case Study: Robust Traveling Salesperson Problem with Time Windows}
\label{sec:robustTSPW}

We investigate the separation algorithm on a last-mile delivery problem with uncertain service times and model it as a robust traveling salesperson problem with time windows (RTSPTW). Let $G=(\V,\E)$ be a directed graph with vertex set $\V=\{0,1,\dots,n\}$ and edge set $\E \subseteq \V \times \V$, where $0$ and $n$ are depot vertices. With each vertex $j \in \V \setminus \{0\}$ we associate a time window $[r_j, d_j]$ for a release time $r_i$ and a deadline $d_j$, and with each edge $(i,j)$ we associate non-negative cost $c_{ij}$ and travel time $t_{ij}$. Moreover, visiting a vertex $j \in \V \setminus \{0\}$ incurs a random service time $\delta_j$. The uncertainty values $\uvec = (\delta_1, \dots, \delta_n)$ are described according to a non-empty budgeted uncertainty set 
\begin{align*}
    \Uset = 
    \left\{ 
        \uvec \in \mathbb{Z}^{n} \colon \sum_{j \in \N} \delta_j \le b, \; \; l_j \le \delta_j \le u_j \;\; \forall j \in \N
    \right\},
\end{align*}
where $l_j, u_j \ge 0$ are lower and upper bounds on the service time of vertex $j$, respectively, and $b$ is an uncertainty budget that controls how risk-averse the decision maker is \citep{BertsimasSim04}. More precisely, lower values of $b$ indicate that scenarios for which all vertices of $\V$ will have high service times are unlikely and do not need to be hedged against. Converserly, larger values of $b$ imply that the decision maker is more conversative in terms of the worst case; in particular, $b \rightarrow +\infty$ represents a classical interval uncertainty set.
 
The objective of the RTSPTW is to find a minimum-cost route (i.e., a Hamiltonian path) starting at vertex $0$ and ending at vertex $n$ that observes time-window constraints with respect to the edge travel times. We formalize it as the MILP
\begin{subequations}
\begin{align}
    \min_{\xvec, \wvec} 
        &\quad 
        \sum_{(i,j) \in \E} c_{ij} x_{ij} \label{model:rtstpw} \tag{MILP-TSP} \\
    \textnormal{s.t.}
        &\quad 
            \sum_{j:(i,j)\in \E}{x_{ij}} = 1, 
            &&\forall i \in \V \setminus\{n\}, \label{tsp1}
            \\
        &\quad
            \sum_{j:(j,i)\in \E}{x_{ji}} = 1, 
            &&\forall i \in \V \setminus\{0\}, \label{tsp2} 
            \\
        &\quad 
            w_j^{\uvec} \geq w_i^{\uvec} + (\delta_i+t_{ij}) x_{ij}-M(1-x_{ij}),
            &&\forall (i,j) \in \E, \uvec \in \Uset, \label{tsp3}
            \\
        &\quad 
            r_j \leq  w_j^{\uvec} \leq d_j, 
            &&\forall j \in \V, \del \in \U, \label{tsp4} \\
        &\quad 
            x_{ij} \in \{0,1\}, 
            &&\forall (i,j) \in \E. \label{varsTSP}
\end{align}
\end{subequations}
In the formulation above, the binary variable $x_{ij}$ denotes if the path includes edge $(i,j) \in \E$ and $M=d_n$ is a valid upper bound on the end time of any service. The variable $w^{\uvec}_j$ is the time that vertex $j \in \V$ is reached when the realization of the service times is $\uvec \in \Uset$. Inequalities \eqref{tsp1}-\eqref{tsp2} ensure that each vertex is visited once. Inequalities \eqref{tsp3}-\eqref{tsp4} represent the robust constraint and impose that time windows are observed in each realization $\uvec \in \Uset$. Note that, since $\Uset \neq \emptyset$, they also rule out cycles in a feasible solution. 

The main challenge in \ref{model:rtstpw} is the requirement that routes must remain feasible for all realizations of the uncertainty set. In particular, solving \ref{model:rtstpw} directly is not viable because of the exponential number of variables and constraints. Thus, existing routing solutions are not trivially applicable to this model, as analogously observed in related robust variants of the traveling salesperson problem (e.g., \citealt{montemanni2007robust,bartolini2021robust}). 

\subsubsection{Constrained Shortest-path Formulation.}

We formulate a multivalued decision diagram $\dd$ that encodes the Hamiltonian paths in $G$ starting at $0$ and ending at $n$. Each $\rootnode-\terminalnode$ path $(a_1, \dots, a_n)$ represents the route $(0,\val{a_1}, \dots, \val{a_n})$ with $\val{a_n} = n$ and $\val{a_j} \in \{1,\dots,n-1\}$ for $j \neq n$. Moreover, for simplicity of exposition, we consider that $\dd$ represents directly a traditional DP formulation of the traveling salesperson problem (TSP), i.e., with each node $u \in \N$ we associate the state $(\mathcal{S}_u,L_u)$ where $\mathcal{S}_u$ contains the set of visited vertices on the paths ending at $u$, and $L_u \in \mathcal{V}$ gives the last vertex visited in all such paths. In particular, $(\mathcal{S}_u, L_{\rootnode}) = (\{0\},0)$ and $L_{\terminalnode} = (\mathcal{V}, n)$. We refer to Example \ref{ex:rptswp} for an instance with five vertices and to \cite{cire2013multivalued} for additional details and compression techniques for $\dd$.

\begin{example}
    \label{ex:rptswp}
    Figure \ref{fig:tspDecisionDiagrams} depicts $\dd$ for a TSP with vertex set  $\V = \{0,\dots,4\}$ where every path must start at $0$ and end at $4$. The labels on each arc denote $\val{a}$. For example, in the left-most node of the third layer, the state $(\mathcal{S}_u,L_u) = (\{0,1,2\},2)$ indicates that vertices in $\{0,1,2\}$ were visited in all paths ending at that node, and that the last vertex in such paths is $2$. Notice that the only outgoing arc $a$ has value $\val{a} = 3$, since (i) this is the only vertex not yet visited in such path, and (ii) paths cannot finish at vertex $4$. \hfill $\square$

    \begin{figure}[t]
        \begin{center}
            \tikzstyle{main node} = [circle,fill=gray!50,font=\scriptsize, inner sep=1pt]            
            \tikzstyle{text node} = [font=\scriptsize]
            \tikzstyle{arc text} = [font=\scriptsize]

            \tikzstyle{optimal arc} = [-,line width=2pt, color=teal!20!white]
            \tikzstyle{blocked arc} = [-,line width=2pt, color=red]

            \begin{tikzpicture}[->,>=stealth',shorten >=1pt,auto,node distance=1cm,
                thick, scale=0.85]        
                \tikzstyle{node label} = [fill=gray!20,font=\scriptsize, inner sep=1pt]


                \node[node label] (r) at (-3,0) {$(\{0\}, 0)$};
                \node[node label] (u11)  at (-7, -1.5)  {$(\{0,1\}, 1)$};
                \node[node label] (u12)  at (-3, -1.5)  {$(\{0,2\}, 2)$};
                \node[node label] (u13)  at (1, -1.5)  {$(\{0,3\}, 3)$};
                \node[node label] (u21)  at (-8, -3.25)  {$(\{0,1,2\}, 2)$};
                \node[node label] (u22)  at (-6, -3.25)  {$(\{0,1,3\}, 3)$};                
                \node[node label] (u23)  at (-4, -3.25)  {$(\{0,1,2\}, 1)$};
                \node[node label] (u24)  at (-2, -3.25)  {$(\{0,2,3\}, 3)$};                
                \node[node label] (u25)  at (0, -3.25)  {$(\{0,1,3\}, 1)$};
                \node[node label] (u26)  at (2, -3.25)  {$(\{0,2,3\}, 2)$};
                \node[node label] (u31)  at (-7, -5.5)  {$(\{0,1,2,3\}, 3)$};
                \node[node label] (u32)  at (-3, -5.5)  {$(\{0,1,2,3\}, 2)$};
                \node[node label] (u33)  at (1, -5.5)  {$(\{0,1,2,3\}, 1)$};
                \node[node label] (t)  at (-3, -7)  {$(\{0,1,2,3,4\}, 4)$};
                
                \path[every node/.style={font=\sffamily\small}]
                (r)                 
                     edge[one arc] node [left, arc text, yshift=0.2cm] {1} (u11)
                     edge[one arc] node [arc text] {2} (u12)
                     edge[one arc] node [arc text] {3} (u13)
                (u11) 
                    edge[one arc] node [arc text, left, yshift=0.2cm] {2} (u21)
                    edge[one arc] node [arc text] {3} (u22)                    
                (u12)
                    edge[one arc] node [arc text, left, yshift=0.2cm] {1} (u23)
                    edge[one arc] node [arc text] {3} (u24)                    
                (u13)
                    edge[one arc] node [arc text, left, yshift=0.2cm] {1} (u25)
                    edge[one arc] node [arc text] {2} (u26)
                (u21)
                    edge[one arc] node [arc text, left] {3} (u31)
                (u22)
                    edge[one arc] node [arc text, yshift=-0.15cm] {2} (u32)
                (u23)
                    edge[one arc] node [arc text, left, xshift=-0.1cm] {3} (u31)
                (u24)
                    edge[one arc] node [arc text, yshift=-0.15cm] {1} (u33)
                (u25)
                    edge[one arc] node [arc text,left, xshift=-0.1cm] {2} (u32)                
                (u26)
                    edge[one arc] node [arc text, yshift=0.2cm] {1} (u33)
                (u31)
                    edge[one arc] node [arc text, left, xshift=-0.2cm] {4} (t)
                (u32)
                    edge[one arc] node [arc text] {4} (t)
                (u33)
                    edge[one arc] node [arc text, right, xshift=0.2cm] {4} (t)
                ;
            \end{tikzpicture}        
        \end{center}
        \caption{A decision diagram for a TSP problem with $n=4$ cities for Example \ref{ex:rptswp}.}
        \label{fig:tspDecisionDiagrams}
    \end{figure}
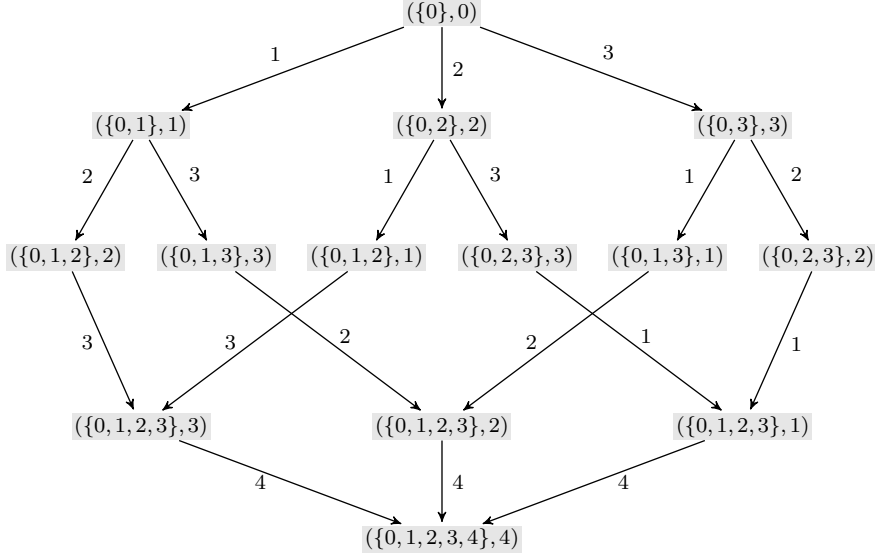

\end{example}

The side constraints are an arc-based representation of \eqref{tsp3}. The inequality system \eqref{csp:side} can be written in terms of arc variables $\yvec$ through auxiliary variables. However, we leverage the DP perspective to obtain a more compact representation. Specifically, for each scenario $\delta \in \Uset$ and node $u \in \N$, let $e_u(\delta)$ be the earliest time any route finishes serving vertex $L_u$ when considering only $\rootnode-\terminalnode$ paths that include $u$. Then,
\begin{align}
    &e_{\rootnode}(\delta) = 0, \\
    &e_{v}(\delta) = \max 
    \left( 
        r_{L_v},
        \min_{(u,v) \in \inarcs{v}} e_u(\delta) + t_{ L_u L_v}
    \right)
    + \delta_{L_v}, && \forall v \in \N \setminus \{ \rootnode \}.
\end{align}
The first equality states that the earliest time we finish serving vertex $0$ is zero by definition. In the second equality, the ``max'' term corresponds to the earliest time to arrive at vertex $L_v$ for a node $v$, which considers its release time and all potential preceding travel time from a predecessor vertex $L_u$. It follows that an $\rootnode-\terminalnode$ path is feasible if and only if
\begin{align}
    e_u(\delta) \le d_{L_u}, && \forall u \in \N, \delta \in \Uset
\end{align}
and the vector $( \, \evec(\delta) \, )_{\delta \in \Uset}$ are the node states augmented by Algorithm \ref{alg:stateaugmenting} per realization $\delta$.

Finally, we require a separation oracle $\sep(\xvec)$ to identify a realization violated by $\xvec$. We propose to pick the ``most violated'' scenario in terms of the number of nodes for which the deadline is not observed (see Appendix \ref{app:sepModel}).

\subsubsection{Numerical Study.} We compare two methodologies for the RTSPTW to evaluate the DD perspective. The first is akin to classical approaches to \ref{model:robust} and consists of applying Algorithm \ref{alg:stateaugmenting} using \ref{model:rtstpw} when deriving a new candidate solution (Step 5); we denote such a procedure by IP. The second approach is Algorithm \ref{alg:stateaugmenting} reformulated via decision diagrams, as developed in this section. We use a simple implementation of the pulse algorithm to solve each iteration of the constraint shortest-path problems over $\dd$ (see description in Appendix \ref{sec:pulse}). We denote our approach by DD-RO. As before, we focus on graphical description of the results, with all associated tables and detail included in Appendix \ref{sec:Tables}. The MILP models were solved in ILOG CPLEX 20.1. Source code will be available at the Github repository. Finally, we remark that both IP and DD-RO can be enhanced using other specialized TSPTW and constrained shortest-path formulations; here we focus on their basis comparison with respect to Algorithm \ref{alg:stateaugmenting}.

Our testbed consists of modified instances for the TSPTW taken from \citet{Dumas95}. In particular, we consider classical problem sizes of $n \in \set{40, 60}$ and time windows widths of $w \in \set{20,40,60,80}$, where a lower value of $w$ indicates tighter time windows and therefore less routing options. For each instance configuration, we incorporate an uncertainty set budget of $\Delta \in \set{4,6,8,10}$ and set the bounds for the service times as $l=0$ and $u=2$. To preserve feasibility we extend the upper time limit for all the nodes by $\Delta$ time units. We generate five random instance per configuration $(n,t,\Delta)$, thus 160 instances in total.

\begin{figure}[h]
	\centering
	\subfloat[Time window width $w$]{
		\includegraphics[scale=0.49]{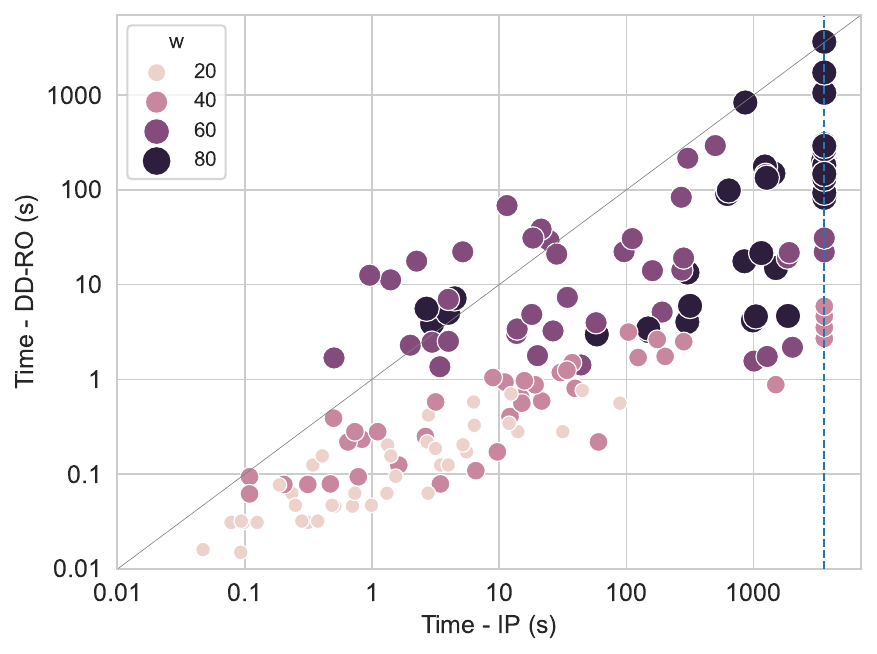}
		\label{fig:widthRTSPTW}
	}%
	\;
	\subfloat[Number of variables]{
		\includegraphics[scale=0.49]{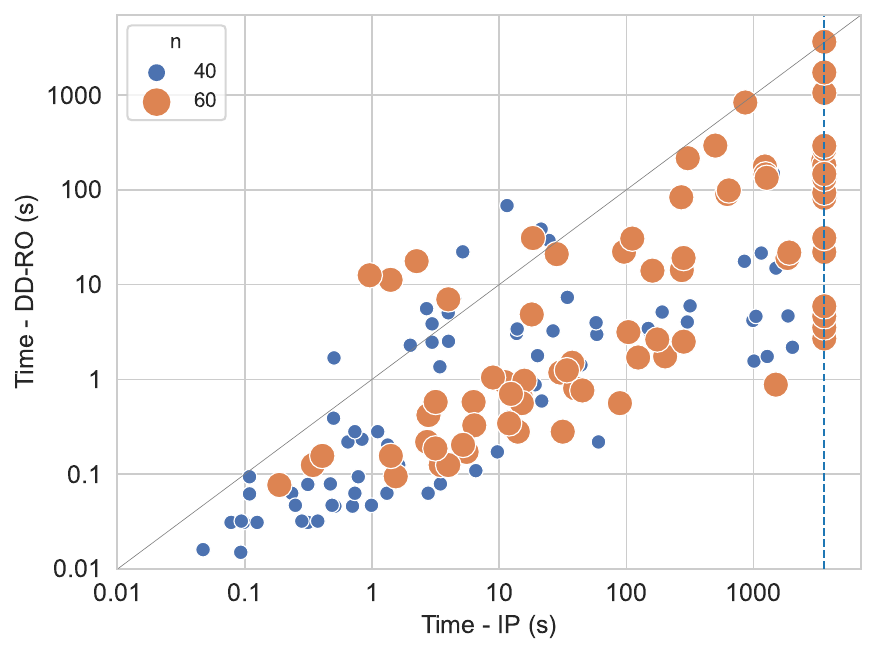}
		\label{fig:nRTSPTW}
	}%
\caption{(Coloured) Scatter plots comparing runtimes between IP and DD-RO based on the time-window width $w$ and the number of variables. Dashed lines (in blue) represent the time-limit mark at 3,600 seconds.}
\label{fig:scatterRTSPTW}
\end{figure} 

Figure \ref{fig:scatterRTSPTW} depict runtimes through scatter plots that compare the time-window width (Figure \ref{fig:widthRTSPTW}) and the number of variables $n$ (Figure \ref{fig:nRTSPTW}), similarly to the previous numerical study in \S\ref{sec:bilevelnumericalanalysis}. Dashed lines (blue) mark the time-limit coordinate at 3,600 seconds. In total, IP and DD-RO solve 137 and 159 instances out of 160, respectively. The runtime for the instances solved by IP is on average 241.9 with a standard deviation of 541.3 seconds, while the runtime for DD-RO for the same instances is 22.1 seconds with a standard deviation of 82.1 seconds. 

Figure \ref{fig:widthRTSPTW} suggests that the complexity of instances is highly affected by time-window width for both methods. In particular, small values of $w$ lead to smaller decision diagrams, and hence faster pulse runtimes. We observe an analogous pattern in Figure \ref{fig:nRTSPTW}, where larger values of $n$ reflect more difficult instances. On average, runtimes for DD-RO were at least 75 times faster than IP, which underestimates the real value since many instances were not solved to optimality by IP. The reason follows from the time per iteration between using pulse and MILP in Algorithm \ref{alg:stateaugmenting}. While both theoretically have the same number of iterations in such instance, pulse is a purely combinatorial approach over $\dd$ and solves the constrained shortest-path problem, on average, in 40 seconds per iteration. For IP, the corresponding model \ref{model:rtstpw} (with fewer scenarios) is challenging to solve, requiring 234 seconds per iteration on average. 

\section{Conclusion}
\label{sec:conclusion}
We study reformulations of a special class of discrete two-stage optimization problems and robust optimization as constrained shortest-path problems (CSP). The methodology consists of reformulating portions of the original problem as a network model, here represented by decision diagrams. The remaining variables and constraints are then incorporated as parameters in budgeted resources over the arcs of the network, reducing the original problem to a CSP.

We propose two approaches to solve the underlying CSP based on each setting. For the first case, we leverage polyhedral structure to rewrite the original problem as a mixed-integer linear programming model, where we convexify non-linear terms connecting the leader's and follower's variables by exploiting duality over the network structure. For the robust case, we proposed a state-augmenting algorithm that iteratively add labels (i.e., resources) to existing dynamic programming approaches to CSPs, which only rely on the existence of a separation oracle that identifies violated realizations of a solution. 

The methods were tested on a competitive project selection problem, where the follower and the leader must satisfy knapsack constraints, and on a robust variant of a traveling salesperson problem with time windows. In both settings, numerical results suggested noticeable improvements in solution time of the CSP-based techniques with respect to state-of-the-art methods on the tested instances, often by orders of magnitude. 

\ACKNOWLEDGMENT{Dr. Lozano gratefully acknowledges the support of the \emph{Office of Naval Research} under grant N00014-19-1-2329, and the \emph{Air Force Office of Scientific Research} under grant FA9550-22-1-0236.}

\bibliographystyle{informs2014}
\bibliography{references.bib}

\clearpage
\begin{APPENDICES}

\section{Proofs}

\begin{proof}{Proof of Proposition \ref{prop:complexity}.}
We reduce the knapsack problem to a formulation of \ref{model:csp} with the required size. Consider any knapsack instance
    \begin{align*}
        \max_{\xvec} 
        \left \{
            p \xvec \colon c \xvec \le b, \;\; \xvec \in \{0,1\}^n
        \right \}
    \end{align*}
    for $a,c \in \mathbb{R}^N$ and $b \in \mathbb{R}^*_{+}$. Consider now a one-width decision diagram having all $n$ permutations of $\{0,1\}$, that is, $\dd$ has one node per layer and every node except $\terminalnode$ has two ougoing arcs, one per each value assignment $\val{a} \in \{0,1\}$. For example, the figure below depicts $\dd$ for $n=4$:
    \medskip
    \begin{center}
        \tikzstyle{main node} = [circle,fill=gray!50,font=\scriptsize, inner sep=1pt]            
        \tikzstyle{text node} = [font=\scriptsize]
        \tikzstyle{arc text} = [font=\scriptsize]

        \tikzstyle{optimal arc} = [-,line width=2pt, color=teal!20!white]
        \tikzstyle{blocked arc} = [-,line width=2pt, color=red]

        \begin{tikzpicture}[->,>=stealth',shorten >=1pt,auto,node distance=1cm,
            thick]        
            \tikzstyle{node label} = [circle,fill=gray!20,font=\scriptsize, inner sep=1pt]

            \node[text node] (l1) at (-1.25, -0.5) {$x_1$};
            \node[text node] (l2) at (-1.25, -1.75) {$x_2$};
            \node[text node] (l4) at (-1.25, -3.15) {$x_3$};
            \node[text node] (l5) at (-1.25, -4.4) {$x_4$};

            \node[node label] (r) at (0,0) {$\;\rootnode\;$};
            \node[node label] (u1)  at (0,-1.25)  {$u_1$};
            \node[node label] (u2)  at (0,-2.5) {$u_2$};
            \node[node label] (u3)  at (0,-3.75) {$u_3$};
            \node[node label] (t) at (0,-5) {$\;\terminalnode\;$};
            
            \path[every node/.style={font=\sffamily\small}]
            (r) 
                edge[zero arc, bend right=45] node [left, arc text] { } (u1)
                edge[one arc, bend left=45] node [right, arc text] { } (u1)
            (u1) 
                edge[zero arc, bend right=45] node [right, arc text] { } (u2)
                edge[one arc, bend left=45] node [right, arc text] { } (u2)
            (u2)
                edge[zero arc, bend right=45] node [left, arc text] { } (u3)
                edge[one arc, bend left=45] node [right, arc text] { } (u3)
            (u3)
                edge[zero arc, bend right=45] node [left, arc text] { } (t)                   
                edge[one arc, bend left=45] node [right, arc text] { } (t);                            
        \end{tikzpicture}        
    \end{center}
    Since $\dd$ encodes the binary set $\{0,1\}^n$, it suffices to define side inequalities \eqref{csp:side} that replicate the knapsack constraint, i.e., we fix $m=1$, coefficients $G_{1 a} = c_{\layer{a}} \val{a}$ for all $a \in \A$, and $\hvec = b$. Finally, for the objective, we have the lengths $\len{a} = -p_{\layer{a}} \val{a}$ for all $a \in \A$, assumed negative since \ref{model:csp} is a ``min'' problem (or, equivalently, we could also change the objective sense to ``max''). Thus, there is a one-to-one correspondance between a knapsack instance and a solution to \ref{model:csp}, and their optimal solution values match. \hfill $\blacksquare$ 
\end{proof}

\medskip
\begin{proof}{Proof of Proposition \ref{prop:DPvalidity}.}
    We will show this result by presenting a bijection between solutions of \ref{model:csp} and $\Vfun_{\rootnode}(\mathbf{0})$, also demonstrating that their solution values match. Specifically, for each node $u \in \N$, let $\pi_u \colon \mathbb{R}^m \rightarrow \A$ be a function that maps a state $s \in \mathbb{R}^m$ to any arc $a \in \A$ that minimizes $\Vfun_{\rootnode}(s)$, i.e., a policy. Since $a \in \outarcs{u}$ in the minimizer of $V_{u}(\cdot)$ for any node $u$, unrolling the recursion $\Vfun_{\rootnode}(\mathbf{0})$ in \ref{model:cspdp} results in a $\rootnode-\terminalnode$ path $(u_1, u_2, u_3, \dots, u_{n+1})$ such that $u_1 = \rootnode$, $u_{n+1} = \terminalnode$, and
    \begin{align*}
        \Vfun_{\rootnode}(\mathbf{0})
        &=
            \len{ \pi_{u_1}(\mathbf{0}) } + V_{\head{\pi_{u_1}(\mathbf{0})}}\left(\mathbf{0} + \gvec_{\pi_{u_1}(\mathbf{0})}\right) \\
        &=
            \len{ \pi_{u_1}(\mathbf{0}) } + \len{ \pi_{u_2}\left(\mathbf{0} + \gvec_{\pi_{u_1}(\mathbf{0})}\right) } + V_{\head{u_1}}\left(\mathbf{0} + \gvec_{\pi_{u_1}(\mathbf{0})} + \gvec_{\pi_{u_2}\left(\mathbf{0} + \gvec_{\pi_{u_1}(\mathbf{0})}\right)}\right) \\
        &= 
            \dots \\            
        &=
            \sum_{j=1}^{n} \len{\pi_{u_j}(\hat{s}_j)}
    \end{align*}
    where $\hat{s}_{1} = \mathbf{0}$ and $\hat{s}_{j} = \hat{s}_{j-1} + G_{\pi_{u_{j-1}}(\hat{s}_{j-1})}$ for $j=2, \dots, n$. Thus, the evaluation of $\Vfun_{\rootnode}(\mathbf{0})$ matches the length of the path $(u_1, u_2, u_3, \dots, u_{n+1})$ considering the arcs specified by the policy $\pi$. Moreover, because of the condition $\svec + \gvec_a \le \hvec$ in the minimizer of $V_{u}(\cdot)$ for any $u \in \N$, we must have $\hat{s}_j \le \hvec$ for all $j = 1,\dots,n$; in particular, for $j=n$ we obtain
    \begin{align*}
        \hvec \ge \hat{s}_n = \mathbf{0} + \sum_{j=1}^{n} G_{\pi_{u_j} (\hat{s}_{j})} = \Gmat \yvec 
    \end{align*}
    for $y_{a} = 1$ if $\pi_{u_j}(\hat{s}_{j-1}) = a$ for some $j \in \{1,\dots,n\}$, and $0$ otherwise. That is, every path in $\dd$ is feasible to \ref{model:csp}. Conversely, every solution to \ref{model:cpsp} can be analogously represented as a policy $\pi$ constructed with the arcs of the associated $\rootnode-\terminalnode$ path, noting that $G$ non-negative implies that every partial sum of arcs also satisfy $\svec + \gvec_a \le \hvec$. \hfill $\blacksquare$
\end{proof}
\medskip

\medskip
\begin{proof}{Proof of Proposition \ref{prop:followerDD}.}
Consider the $\rootnode-\terminalnode$ arc-specified path $p = (a_1, a_2, \dots, a_n)$ in $\dd^F$ extracted from an arbitrary feasible solution $\yvec$ of \ref{model:csp}. The associated solution vector $x^p = (\val{a_1}, \val{a_2}, \dots, \val{a_n})$ satisfies $\Amat^F x^p \le \bvec^F$ by construction of $\dd^F$. Further, because of \eqref{csp:follower}, we can only have $x^p_j = \val{a_j} = 1$ for some $j \in \{1,\dots,n\}$ if $\xvec^{L}_j = 0$, that is, $x^p \le \mathbf{1} - \xvec^L$. Thus, since the solution values and path lengths match by construction of $\dd^F$, an optimal solution of the corresponding \ref{model:csp} satisfies \eqref{bilevel:follower}, and the converse holds analogously. 
    \hfill $\blacksquare$
\end{proof}

\medskip
\begin{proof}{Proof of Proposition \ref{prop:feasiblitySet}.}
    The assumptions of the statement imply that the optimal basic feasible solutions of the linear program
    \begin{align}
        \max_{\yvec}             
            &\quad
                \sum_{a \in \A} \len{a} y_{a} \\
        \text{s.t.} 
            &\quad 
                \sum_{a \in \outarcs{\rootnode}} y_a = 1, \label{eq:lp_ct1} \\
            &\quad 
                \sum_{a \in \Gamma^+(u)} y_a - \sum_{a \in \Gamma^-(u)} y_a = 0, 
                    &&\forall u \in \N \setminus\set{\rootnode, \terminalnode}, \label{eq:lp_ct2} \\
            &\quad 
                \sum_{a \in \A} g_{i,a} y_a \le \hvec_i,  & \forall i \in \{1,\dots,m\} \label{eq:lp_ct3} \\
            &\quad 
                \yvec \in [0,1]^{|\A|}
    \end{align}
    are optimal to \ref{model:cspmilp} and vice-versa, where $g_{i,a}$ is the element at the $i$-th row and $a$-th column of $G$. Consider the dual obtained by associating variables $\pi$ with constraints \eqref{eq:lp_ct1}-\eqref{eq:lp_ct2} and variables $\gamma$ with \eqref{eq:lp_ct3}, i.e.,
    \begin{align}
        \min_{\pi, \gamma}
            &\quad
                \pi_{r} + \sum_{i=1}^m \gamma_i \hvec_i  \\
        \text{s.t.} 
            &\quad 
                \pi_{\tail{a}} - \pi_{\head{a}} + \gammavec^{\top} \gvec_{a} \ge \len{a},
                    &&\forall a \in \A, \\
            &\quad 
                \gamma \ge 0.
    \end{align}
    Then, by strong duality, any optimal solution to both systems satisfy the system $\feasKKT$ composed by the primal and dual feasibility constraints, in addition to inequality \eqref{eq:kkt_ct3} ensuring that the objective function values of the primal and the dual match. Finally, \eqref{eq:kkt_ct4} ensures that $\yvec$ is also integral. \hfill $\blacksquare$
\end{proof}

\medskip
\begin{proof}{Proof of Proposition \ref{prop:optSolFollower}.}
    Consider any arbitrary feasible solution $(\hat{\xvec}^F, \hat{\yvec}, \hat{\pivec}, \hat{\gammavec})$ to \eqref{follow:ref1.1}-\eqref{follow:refVars}. Pick any arc $a \in \A$ such that $x^L_{\layer{a}} = 1$, $\val{a} = 1$, but $\hat{\gamma}_a \neq M_a$. Notice that $\hat{\gamma}_a$ is in constraints \eqref{follow:ref2} and \eqref{follow:ref3}. For \eqref{follow:ref3}, the difference $(1 - x^L_{\layer{a}})$ is zero, and hence changing $\hat{\gamma}_a$ does not impact such an equality. 
    For \eqref{follow:ref2} we restrict our attention to the case $\hat{\gamma}_{a} > M_a$; notice that, otherwise, increasing $\hat{\gamma}_{a}$ would not affect the inequalities. In this case, condition \eqref{bigMCondition} guarantees that \eqref{follow:ref2} is satisfied if we set $\hat{\gamma}_a = M_a$, completing the proof. \hfill $\blacksquare$
\end{proof}

\medskip
\begin{proof}{Proof of Proposition \ref{prop:coolBigM}}
First consider that if $\cost_{j}^F < 0$ then follower variable $x_{j}^F = 0$ at any optimal solution of \ref{model:bilevel}  since $\Amat^F \in \mathbb{R}^{m_F \times n}_+$. Thus we assume without loss of generality that $\cost_{j}^F \geq 0$ for $j \in \set{1, \dots, n}$, which ensures that $\len{a} \geq 0$ for all $a \in \A$.    

We now show that there exists an optimal solution to \ref{model:mipbilevel} that satisfies $\gamma_a = \len{a}$ for all arcs $a \in \A$ such that $x^L_{\layer{a}} = 1$ and $\val{a} = 1$. Consider an optimal solution to \ref{model:mipbilevel} given by $({\xvec^{L*}, \xvec^{F*}, \yvec^*, \pivec^*, \gammavec^*})$ and a decision diagram $\hat{\dd} = (\N,\hat{\A},\valvec,\lenvec)$ that is obtained by removing from $\A$ any arcs for which $x^{L*}_{\layer{a}} = 1$ and $\val{a} = 1$, i.e., removing from $\dd^F$ all the arcs with zero capacity in constraint \eqref{follow:ref1.3}. Formally, the modified set of arcs is given by 
\begin{equation*}
    \hat{\A} = \left\{a \in \A \colon x^{L*}_{\layer{a}} = 0 \text{ or } \val{a} = 0\right\}.
\end{equation*} 
It follows from the non-negativity of $\Amat^F$ that every node in $\dd^F$ has at least one outgoing zero arc and thus the set of nodes of $\hat{\dd}$ is the same as the original set of nodes. 

Since we only remove arcs with zero capacity,  any feasible path in $\hat{\dd}$ is also a feasible path in $\dd^F$, under $\xvec^{L*}$. Thus, the optimal path given by $\yvec^*$ over $\dd^F$ is also an optimal primal solution over $\hat{\dd}$. Next, let us consider an optimal dual solution $\hat{\pivec}$, corresponding to the optimal path given by $\yvec^*$ over $\hat{\dd}$. Given that all the arcs in $\hat{\A}$ have a capacity of $1$, we remove constraints \eqref{follow:ref1.3} from the primal, which in turn removes variables $\gammavec$ from the dual. Dual feasibility of $\hat{\pivec}$ hence ensures that
\begin{equation}
    \hat{\pi}_{\tail{a}} - \hat{\pi}_{\head{a}} \ge \len{a} \quad \forall a \in \hat{\A} \label{dualfeasibilityCool}
\end{equation}
and strong duality ensures that
\begin{equation}
    \lenvec^{\top} \yvec^* - \hat{\pi}_{\rootnode} = 0. \label{dualCool}
\end{equation}
We now show that $ \hat{\pi}_{\tail{a}} - \hat{\pi}_{\head{a}} \geq 0$ for all $a \in \A$. Note that dual variables $\hat{\pivec}$ capture the node potential and are computed as the length of an optimal completion path from any node in $\N$ to $\terminalnode$. These completion paths exists for every node in $\N$ since $\Amat^F$ is non-negative. 

Pick any arc $a \in \A$ such that $\val{a} = 0$. Since $a \in \hat{\A}$ and $\len{a} = 0$, constraints \eqref{dualfeasibilityCool} ensure that $ \hat{\pi}_{\tail{a}} - \hat{\pi}_{\head{a}} \geq 0$. Now pick any arc $a \in \A$ such that $\val{a} = 1$. Assume by contradiction that  $\hat{\pi}_{\tail{a}} - \hat{\pi}_{\head{a}} < 0$. Since $\Amat^F$ is non-negative, there exists an arc $a' \in \hat{\A}$ leaving $\tail{a}$ with $\val{a'} = 0$. Furthermore, observe that any partial solution composed of paths starting from $\head{a}$ and ending at $\terminalnode$ are also found in some other path starting at $\head{a'}$ and ending at $\terminalnode$, again because $A^F$ is non-negative and fixing $x^F_{\layer{a'}}= \val{a'} = 0$ can only increase the number of solutions. As a result we have that $\hat{\pi}_{\head{a'}} \geq \hat{\pi}_{\head{a}}$. Since $a' \in \hat{\A}$ and $\len{a'} = 0$, constraints \eqref{dualfeasibilityCool} ensure that $\hat{\pi}_{\tail{a'}} \geq \hat{\pi}_{\head{a'}}$. Because $\tail{a'} = \tail{a}$ we obtain that $\hat{\pi}_{\tail{a}} \geq \hat{\pi}_{\head{a'}} \geq \hat{\pi}_{\head{a}}$, which contradicts our assumption that $\hat{\pi}_{\tail{a}} - \hat{\pi}_{\head{a}} < 0$.    

Using the results above now consider for any optimal solution to \ref{model:mipbilevel} given by $({\xvec^{L*}, \xvec^{F*}, \yvec^*, \pivec^*, \gammavec^*})$, an alternative solution given by $({\xvec^{L*}, \xvec^{F*}, \yvec^*, \hat{\pivec}, \gammavec'})$, where 
\begin{align*} 
\gamma'_a=&
\begin{cases}
 \len{a} \text{ if }  x^{L*}_{\layer{a}} = 1 \text{ and } \val{a} = 1\\
0 \quad \text{otherwise}
\end{cases} \quad \forall a\in\A.
\end{align*}
Note that $(\hat{\pivec}, \gammavec')$ satisfies constraints \eqref{follow:ref2} and \eqref{follow:ref2.5} because  $ \hat{\pi}_{\tail{a}} - \hat{\pi}_{\head{a}} \geq 0$ for all $a \in \A$. Additionally, $(\yvec^*,\hat{\pivec}, \gammavec')$ satisfies \eqref{follow:ref3} because of \eqref{dualCool} and the fact that $\sum_{a \in \A \colon \val{a} = 1} \gamma_a (1 - x^L_{\layer{a}}) = 0$ by construction. Since the objective value for both solutions is the same, then $({\xvec^{L*}, \xvec^{F*}, \yvec^*, \hat{\pivec}, \gammavec'})$ is an optimal solution to \ref{model:mipbilevel}. \hfill $\blacksquare$
\end{proof}

\medskip
\begin{proof}{Proof of Proposition \ref{prop:generalBigM}}
From Proposition \ref{prop:optSolFollower} we have that valid $M$-values should be large enough to provide an upper bound for the $\gamma$-variables. Consider any feasible solution to \ref{model:mipbilevel} given by $({\hat{\xvec}^{L}, \hat{\xvec}^{F}, \hat{\yvec}, \hat{\pivec}, \hat{\gammavec}})$ and note that for all arcs $a \in \A$ such that $\hat{x}^L_{\layer{a}} = 1$ and $\val{a} = 1$, the corresponding dual $\gamma$-variables could take any value in the range 
\begin{equation*}
    \hat{\gamma}_a \in [ \len{a}+ \hat{\pi}_{\head{a}}-\hat{\pi}_{\tail{a}},\infty],
\end{equation*}
preserving feasibility. Assume without loss of generality that
\begin{equation}
    \hat{\gamma}_a = \len{a}+ \hat{\pi}_{\head{a}}-\hat{\pi}_{\tail{a}},  \quad \forall a \in \A\colon x^L_{\layer{a}} = 1 \text{ and } \val{a} = 1,  \label{gammaDef}
\end{equation}
that is, $\hat{\gamma}_a$ is the shadow price corresponding to constraints \eqref{follow:ref1.3} and captures the change in the objective function associated with adding one unit of capacity to arc $a$. The values for $\hat{\pi}_{\head{a}}$ and $\hat{\pi}_{\tail{a}}$ are computed as the length of the best completion path from nodes $\head{a}$ and $\tail{a}$ to $\terminalnode$, if a completion path exists, or some arbitrary value otherwise. The relatively complete recourse property ensures that there exists at least one path from $\rootnode$ to $\terminalnode$ and as a result
\begin{equation} 
\sum_{j = 1}^n \min\{\cost_{j}^F , 0\} \leq \hat{\pi}_i \leq \sum_{j = 1}^n \max\{\cost_{j}^F , 0\}, \quad \forall i\in\N.    \label{pidef}
\end{equation}
Combining \eqref{gammaDef} and \eqref{pidef} we obtain that
\begin{equation}
    \hat{\gamma}_a \leq \len{a} + \sum_{j = 1}^n \max\{\cost_{j}^F , 0\} - \sum_{j = 1}^n \min\{\cost_{j}^F , 0\},  \quad \forall a \in \A\colon x^L_{\layer{a}} = 1 \text{ and } \val{a} = 1,
\end{equation}
completing the proof. 
 \hfill $\blacksquare$
\end{proof}

\medskip
\begin{proof}{Proof of Proposition \ref{prop:robustReformulation}}
By construction of $\dd^R$, there is a one-to-one correspondence between paths of $\dd^R$ and solutions of $\feasibleSet$. Consider the $\rootnode-\terminalnode$ arc-specified path $p = (a_1, a_2, \dots, a_n)$ in $\dd^R$ extracted from an arbitrary feasible solution $\yvec$ of \ref{model:csp}, and let $x^p = (\val{a_1}, \val{a_2}, \dots, \val{a_n}) \in \feasibleSet$ be its associated solution. Then, for any $\delta \in \Uset$ and $i \in \{1,\dots,r\}$,
    \begin{align*}
        \bvec_i(\delta) \ge g_i(\delta) \, y = \sum_{a \in \A} g_{i,a}(\delta) \, y_a = \sum_{a \in \A} \val{a} \, a_{i, \layer{a}}(\delta) \, y_a
        = \sum_{j=1}^n a_{i, j}(\delta) \val{a_j} = \sum_{j=1}^n a_{i, j}(\delta) x^p_j = a_i x^p.
    \end{align*}
    That is, $x^p$ is feasible to \ref{model:robust}. Conversely, any feasible solution to \ref{model:robust} can be shown feasible to \ref{model:csp} by applying the same steps, noticing that the objective functions are equivalent.
    \hfill $\blacksquare$
\end{proof}

\medskip
\begin{proof}{Proof of Proposition \ref{prop:finiteU}.}
    Property (a) follows because \ref{model:cspdp} is equivalent to \ref{model:robust} for any $\Uset' \subseteq \Uset$ due to Proposition \ref{prop:robustReformulation} (correctness of the reformulation) and Proposition \ref{prop:DPvalidity} (correctness of \ref{model:cspdp}). Thus, since \ref{model:robust} written in terms of $\Uset'$ has less constraints than the same model written in terms of $\Uset''$, we must have $z(\Uset') \le z(\Uset'')$.
    
    For property (b), let $\mathcal{P}$ be the finite set of $\rootnode-\terminalnode$ paths in $\dd^R$. Partition $\mathcal{P}$ into subsets $\mathcal{P}^F$ and $\mathcal{P}^{I} = \mathcal{P} \setminus \mathcal{P}^F$, where $p \in \mathcal{P}^F$ if and only if its associated solution $x^p$ is feasible to \ref{model:robust}. Thus, for each $\hat{p} \in \mathcal{P}^I$, there exists at least one scenario $\delta^{\hat{p}} \in \Uset$ where some inequality of $A(\delta^{\hat{p}}) \le b(\delta^{\hat{p}})$ is violated. We build the set
    \begin{align*}
        \Uset^* = \bigcup_{\hat{p} \in \mathcal{P}^I} \{ \delta^{\hat{p}} \}
    \end{align*}
    which is also finite and must be such that $z(\Uset^*) = z(\Uset)$, since only paths in $\mathcal{P}^F$ are feasible to \ref{model:csp} which, by definition, are also feasible to \ref{model:robust}. \hfill $\blacksquare$
\end{proof}

\medskip
\begin{proof}{Proof of Proposition \ref{prop:complexityStateAugmenting}}
At any iteration $t$, the algorithm either (i) stops, in which case the solution is optimal since $\Uset_{t+1} \subset \Uset$; or (ii) $\xvec^*$ is infeasible due to some scenario $\delta$. Because $\delta$ belongs to the future sets $\Uset_{t+2}, \Uset_{t+3}, \dots$, we must have that $\xvec^*$ is infeasible to \ref{model:dplp} at any future iterations, i.e., its associated path is never selected as an optimal $\rootnode-\terminalnode$ path. Thus, since the number of paths of $\dd^R$ is finite, the algorithm must terminate after all infeasible paths have been exhausted. \hfill $\blacksquare$
\end{proof}

\bigskip

\section{Decision Diagram Modeling}
\label{sec:modelDD}

In this section, we briefly discuss the two general steps of modeling a problem as a discrete optimization problem \ref{model:discretep}; additional details and examples are provided in \cite{bergman2016}. Specifically, the first step rewrites the problem as the recursive model
\begin{align*}
    \min_{\xvec, \svec} 
        &\quad 
            \sum_{i=1}^n \hat{f}(s_i, x_i) \\
    \textnormal{s.t.} 
        &\quad 
            s_{i+1} = T_i(s_{i}, x_{i}), && \forall i \in \{1,\dots,n\}, \\
        &\quad 
            x_{i} \in \mathcal{X}(s_{i}), && \forall i \in \{1,\dots,n\},
\end{align*}
where $s_1, \dots, s_{n}, s_{n+1}$ are state variables, $T_i(s_i, x_i)$ for all $i = 1,\dots,n$ are state-transition functions, $\hat{f}(s_i, x_i)$ are cost functions, and $\mathcal{X}(s_{i})$ are the possible values that variable $x_i$ admits given the state element $s_i$. A recursive reformulation is valid when the feasible vectors $\xvec$ of the system above have a one-to-one relationship with $\feasibleSet$ and 
$\sum_{i=1}^n \hat{f}(s_i, x_i) = f(\xvec)$. For instance, for the knapsack problem 
\begin{align*}
    \max_{\xvec} 
        &\quad
            \sum_{j=1}^n c_j x_j  \\
    \textnormal{s.t.}
        &\quad 
            \sum_{j=1}^n a_j x_j \le b \\
        &\quad
            x_j \in \{0,1\}, &&\forall j \in \{1,\dots,n\},
\end{align*}
with $a$ and $c$ non-negative, the recursive model is 
\begin{align*}
    \max_{\xvec, \svec} 
        &\quad
            \sum_{j=1}^n c_j x_j  \\
    \textnormal{s.t.}
        &\quad 
            s_{j+1} = s_{j} + a_j x_j, && \forall j \in \{1,\dots,n\} \\
        &\quad
            s_{j} + a_j x_j \le b, &&\forall j \in \{1,\dots,n\}, \\
        &\quad 
            x_j \in \{0,1\}, &&\forall j \in \{1,\dots,n\}, \\
        &\quad 
            s_1 = 0
\end{align*}
that is, $\svec_j$ encodes the knapsack load at the $j$-th stage, $T_i(s_j, x_j) = s_j + a_j x_j$, $\hat{f}(s_j,x_j) = c_j x_j$, and $\feasibleSet(s_j) = \{ x \in \{0,1\} \colon s_j + a_j x_j \le b\}$. 

Given the recursive formulation above, we write the initial $\dd$ as the underlying state-transition graph, i.e., the $j$-th layer $\N_j$ includes one node per possible value of $s_j$, and there exists an arc with label $x_j$ between two nodes $s_j$, $s_{j+1}$ if and only if $s_{j+1} = s_{j} + a_j x_j$ and $x_j \in \feasibleSet(s_j)$. The length of such an arc is set to $\hat{f}(s_i, x_i)$. For example, consider the knapsack instance from Example \ref{ex:dd}:
\begin{align*}
    \max_{\xvec} 
        \quad&  
            4x_1 + 3x_2 + 7x_3 + 8x_4 \\ 
    \textnormal{s.t.}
        \quad&
            7 x_1 + 5 x_2 + 4 x_3 + x_4 \leq 8, \\
        \quad&
            x_1, x_2, x_3, x_4 \in \{0,1\}.
\end{align*}
The resulting (non-reduced) $\dd$ is as follows, where node labels correspond to the states $s_j$ and arc labels to $\hat{f}(s_i, x_i)$. We also represent all possible values of $s_{n+1}$ into a single terminal node $\terminalnode$ with no loss of information. For exposition purposes, we also omit the labels on the last arcs, which are all equivalent ($0$ for dashed arcs, and $1$ for solid arcs).

\medskip

\begin{center}
    \tikzstyle{main node} = [circle,fill=gray!50,font=\scriptsize, inner sep=1pt]            
    \tikzstyle{text node} = [font=\scriptsize]
    \tikzstyle{arc text} = [font=\scriptsize]

    \tikzstyle{optimal arc} = [-,line width=2pt, color=teal!20!white]
    \tikzstyle{blocked arc} = [-,line width=2pt, color=red]

    $\quad \; $
    \begin{tikzpicture}[->,>=stealth',shorten >=1pt,auto,node distance=1cm,
        thick]        
        \tikzstyle{node label} = [circle,fill=gray!20,font=\scriptsize, inner sep=1pt]

        \node[node label] (r) at (0,0) {$0$};
        \node[node label] (u1a)  at (-1,-1.25)  {$0$};
        \node[node label] (u1b)  at (1,-1.25)  {$7$};
        \node[node label] (u2a)  at (-1,-2.5) {$0$};
        \node[node label] (u2b)  at (0,-2.5) {$5$};
        \node[node label] (u2c)  at (1,-2.5) {$7$};
        \node[node label] (u3a)  at (-3,-3.75) {$0$};
        \node[node label] (u3b)  at (-1.5,-3.75) {$4$};
        \node[node label] (u3c)  at (0,-3.75) {$5$};
        \node[node label] (u3d)  at (1.5,-3.75) {$7$};
        \node[node label] (t)  at (0,-5) {$\terminalnode$};
        
        \path[every node/.style={font=\sffamily\small}]
        (r)             
            edge[zero arc] node [left, arc text] {$0$} (u1a)
            edge[one arc] node [right, arc text] {$4$} (u1b)
        (u1a)
            edge[zero arc] node [right, arc text] {$0$} (u2a)
            edge[one arc] node [right, arc text] {$\;\;3$} (u2b)
        (u1b)
            edge[zero arc] node [left, arc text] {$0$} (u2c)
        (u2a)
            edge[zero arc] node [left, arc text] {$0$} (u3a)
            edge[one arc] node [right, arc text] {$7$} (u3b)
        (u2b)
            edge[zero arc] node [left, arc text] {$0$} (u3c)
        (u2c)
            edge[zero arc] node [left, arc text] {$0$} (u3d)        
        (u3a)
            edge[zero arc, bend right=30] node [left, arc text] {} (t)
            edge[one arc, bend right=10] node [right, arc text] {} (t)
        (u3b)
            edge[zero arc, bend right=20] node [left, arc text] {} (t)
            edge[one arc, bend left=20] node [left, arc text] {} (t)
        (u3c)
            edge[zero arc, bend right=20] node [left, arc text] {} (t)
            edge[one arc, bend left=20] node [left, arc text] {} (t)
        (u3d)
            edge[zero arc, bend right=20] node [left, arc text] {} (t)
            edge[one arc, bend left=20] node [left, arc text] {} (t)
            ;
    \end{tikzpicture}        
\end{center}

Finally, the second step of the procedure is known as reduction, where we compress the graph by eliminating redundant nodes. For example, all nodes in the fourth layer can be compressed into a single node, since they have equivalent paths to $\terminalnode$. More precisely, two nodes $u, u' \in \N_j$ are redundant if their outgoing arcs are equivalent, that is, for each arc $a$ outgoing from $u$, there exists another arc $a'$ with the same head node, length, and value; and vice-versa for node $u'$. The reduction procedure is a bottom-up traversal that merges nodes if their outgoing arcs are equivalent, i.e.,
\begin{enumerate}
     \item \textbf{For each} layer $j = n, n-1, n-2, \dots, 2$:
     \begin{enumerate}
         \item \textbf{If} there are two redundant nodes $u, u' \in \N_j$, \textbf{then} merge them into a single node, redirecting the corresponding incoming arcs.
     \end{enumerate}
\end{enumerate}

Figure \ref{fig:knap_dd}(a) depictes the resulting graph after applying the reduction mechanism above.

\section{The Pulse Algorithm}
\label{sec:pulse}
The pulse algorithm proposed originally by \citet{lozano2013}, is a recursive enumerative search based on the idea of propagating a \emph{pulse} through a directed network, starting at the root node. Pulses represent partial paths and propagate through the outgoing arcs from each node, storing information about the partial path being explored. Once a pulse corresponding to a feasible path reaches the end node, the algorithm updates a primal bound on the objective function, stops the pulse propagation, and backtracks to continue the recursive search resulting in a pure depth-first exploration. Opposite to labelling algorithms, pulses are not stored in memory but passed as arguments in the recursive search function. If nothing prevents pulses from propagating, the pulse algorithm enumerates all feasible paths from root node to end node ensuring that an optimal path is found. 

To avoid a complete enumeration of the solution space, the pulse algorithm employs a set of pruning strategies proposed to prune pulses corresponding to partial paths, without cutting off an optimal solution. The core strategies in the pulse algorithm are pruning by infeasibility, bounds, and dominance. The infeasibility pruning strategy uses a dual bound on the minimum resource required to reach the end node from any given intermediate node and discards partial paths that cannot be completed into a feasible complete path. The bounds pruning strategy uses dual bounds on the minimum-cost (maximum-profit) paths from any intermediate node to the end node and discards partial paths that cannot be part of an optimal solution. The dominance pruning strategy uses a limited number of labels to store information on partial paths explored during the search and discard partial paths based on traditional dominance relationships. In contrast to labeling algorithms, the pulse algorithm does not rely on extending every non-dominated label for correctness; instead, this is guaranteed by properly truncating the recursive search. Hence, even if no labels were used at any of the nodes, the algorithm remains correct.

The pulse algorithm has been extended for the elementary shortest path problem with resource constraints \citep{Lozano2015}, the biobjective shortest path problem \citep{Duque2015}, the weight constrained shortest path problem with replenishment \citep{Bolivar2014}, the orienteering problem with time windows \citep{Duque2014}, and more recently, the robust shortest path problem \citep{Duque2019}. Beyond the domain of shortest path problems, several authors have used the pulse algorithm as a component to solve other hard combinatorial problems. For instance, the pulse algorithm has been used in network interdiction \citep{Lozano2017}, shift scheduling \citep{Restrepo2012}, evasive flow \citep{Arslan2018}, resource constrained pickup-and-delivery \citep{Schorotenboer2019}, and green vehicle routing problems \citep{Montoya2016}, among others.

\section{Separating the Most Violated Scenario for RTSPTW} \label{app:sepModel}
For ease of notation, let $r_0 = 0$ and $d_0 = +\infty$. To separate the scenario in which the deadlines are most violated we solve model \ref{model:sep} for a fixed $\xvec$:
\begin{align}
    \max_{\vvec, \delta, \wvec} 
        &\quad 
            \sum_{j \in \V} v_j \label{model:sep} \tag{SEP-TSP} \\
    \textnormal{s.t.}
        &\quad
            w_0 = 0, \label{sep:ct1} \\
        &\quad 
            w_j = \max \{ r_{j}, w_i + \delta_i + t_{ij} \},
            &&\forall (i,j) \in \E \colon x_{ij} = 1, \label{sep:ct2}
            \\
        &\quad 
            w_j \ge (d_j + \epsilon)v_j,
            && \forall j \in \V, \label{sep:ct4} \\
        &\quad 
            \delta \in \Uset, v_j \in \{0,1\}, 
            && \forall j \in \V. \label{sep:ct5}
        \end{align}
In our separation model, $v_j = 1$ if and only if the time window of vertex $j$ is violated, $j \in \V$. The equalities \eqref{sep:ct1}-\eqref{sep:ct2} specify the arrival times $\wvec$ that follow from $\xvec$. The inequalities \eqref{sep:ct4} enforce the definition of $v_j$ considering a sufficiently small $\epsilon$ (e.g., the machine precision). Finally, \eqref{sep:ct5} specify the domain of variables. We note that \ref{model:sep} can be solved by MILP solvers after linearizing the ``max'' in \eqref{sep:ct2} with additional binary variables.

\section{Additional Tables}
\label{sec:Tables}
\subsection{Competitive Project Scheduling}
Table \ref{tab:CPS} presents extended results for our experiments with competitive scheduling instances. The first column indicates the number of items. The second column presents the budget tightness parameter. Columns 3 to 8 present for each algorithm the average computational time in seconds calculated among the 5 instances of the same configuration, the number of instances solved within the time limit, and the average optimality gap computed over the instances not solved within the time limit. We use a dash in the ``Gap" column to indicate that all instances in the row are solved to optimality. For instances not solved within the time limit we record a computational time of $3600$ seconds. Columns 9 and 10 present for the DD-based approach the average number of nodes and arcs in the resulting DD. 
\begin{table}[H]
\footnotesize
\caption{Comparing \texttt{B\&C} and \texttt{DDR} on a set of synthetic competitive scheduling problem instances }
\label{tab:CPS}
\centering
\begin{tabular}{rcrrrrrrrr}
\hline
\multicolumn{ 1}{c}{{\it {}}} &  \multicolumn{ 3}{c}{{\bf \texttt{B\&C}}} & \multicolumn{ 5}{c}{{\bf \texttt{DDR}}} \\
   { $n$} & {\bf $\omega$} & {\bf Time (s)} & {\bf $\#$ Solved} &  {\bf Gap} & {\bf Time (s)} & {\bf $\#$ Solved} &  {\bf Gap} & {\bf Nodes} & {\bf Arcs} \\
\hline
           &        0.1 &          1 &          5 &          - &          1 &          5 &          - &        403 &        783 \\

           &       0.15 &          3 &          5 &          - &          2 &          5 &          - &        675 &       1328 \\

        30 &        0.2 &         10 &          5 &          - &          6 &          5 &          - &        921 &       1821 \\

           &       0.25 &         26 &          5 &          - &         12 &          5 &          - &       1126 &       2234 \\

           &            &            &            &            &            &            &            &            &            \\

           &        0.1 &          5 &          5 &          - &          3 &          5 &          - &        907 &       1784 \\

           &       0.15 &         16 &          5 &          - &         10 &          5 &          - &       1407 &       2786 \\

        40 &        0.2 &        151 &          5 &          - &         28 &          5 &          - &       1847 &       3669 \\

           &       0.25 &       3066 &          2 &        3\% &        105 &          5 &          - &       2232 &       4440 \\

           &            &            &            &            &            &            &            &            &            \\

           &        0.1 &         37 &          5 &          - &         12 &          5 &          - &       1531 &       3025 \\

           &       0.15 &       1016 &          4 &      0.4\% &         56 &          5 &          - &       2365 &       4695 \\

        50 &        0.2 &       3387 &          1 &        7\% &        352 &          5 &          - &       3088 &       6145 \\

           &       0.25 &   $>$3600 &          0 &       11\% &       1727 &          4 &        3\% &       3706 &       7383 \\
\hline
           & {\bf Average} &        943 &         47 &        7\% &        193 &         59 &        3\% &       1684 &       3341 \\
\hline
\end{tabular}  
\end{table}

Table \ref{tab:CPS2} presents additional results obtained by fixing the number of projects to $n=30$ and exploring different parameters for the distribution of the projects cost as well as a wider range of the budget tightness parameter. 

\begin{table}[H]
\footnotesize
\caption{Sensitivity analysis for \texttt{B\&C} and \texttt{DDR} on a set of synthetic competitive scheduling problem instances }
\label{tab:CPS2}
\centering
\begin{tabular}{rcrrrrrrrr}
\hline
\multicolumn{ 1}{c}{{\it {}}} &  \multicolumn{ 3}{c}{{\bf \texttt{B\&C}}} & \multicolumn{ 5}{c}{{\bf \texttt{DDR}}} \\
   {\bf Distribution} & { $\omega$} & {\bf Time (s)} & {\bf $\#$ Solved} &  {\bf Gap} & {\bf Time (s)} & {\bf $\#$ Solved} &  {\bf Gap} & {\bf Nodes} & {\bf Arcs} \\
\hline
           &        0.1 &          2 &          5 &          - &          1 &          5 &          - &        569 &       1116 \\

           &        0.2 &         14 &          5 &          - &         13 &          5 &          - &       1422 &       2826 \\

           &        0.3 &         50 &          5 &          - &         96 &          5 &          - &       2051 &       4086 \\

           &        0.4 &        497 &          5 &          - &        478 &          5 &          - &       2435 &       4857 \\

   U[1,50] &        0.5 &       $>$3600 &          0 &       16\% &       1209 &          5 &          - &       2565 &       5120 \\

           &        0.6 &       $>$3600 &          0 &        8\% &        371 &          5 &          - &       2437 &       4865 \\

           &        0.7 &       2395 &          4 &        6\% &         45 &          5 &          - &       2052 &       4097 \\

           &        0.8 &        122 &          5 &          - &          9 &          5 &          - &       1423 &       2841 \\

           &        0.9 &          1 &          5 &          - &          2 &          5 &          - &        579 &       1153 \\

           &            &            &            &            &            &            &            &            &            \\

           &        0.1 &          2 &          5 &          - &          2 &          5 &          - &        724 &       1425 \\

           &        0.2 &         19 &          5 &          - &         53 &          5 &          - &       2236 &       4452 \\

           &        0.3 &         98 &          5 &          - &        596 &          5 &          - &       3500 &       6983 \\

           &        0.4 &        730 &          5 &          - &       2622 &          2 &        2\% &       4282 &       8551 \\

  U[1,100] &        0.5 &       2804 &          3 &       10\% &       3023 &          1 &        7\% &       4536 &       9060 \\

           &        0.6 &       $>$3600 &          0 &        4\% &       2332 &          3 &        2\% &       4284 &       8559 \\

           &        0.7 &       1562 &          4 &      0.2\% &        459 &          5 &          - &       3505 &       7002 \\

           &        0.8 &         72 &          5 &          - &         39 &          5 &          - &       2243 &       4481 \\

           &        0.9 &          2 &          5 &          - &          8 &          5 &          - &        735 &       1466 \\
\hline
           & {\bf Average} &       1065 &         71 &        9\% &        631 &         81 &        4\% &       2310 &       4608 \\
\hline
\end{tabular}  
\end{table}

\subsection{Robust Traveling Salesman Problem}
Table \ref{tab:TSP} presents additional results for our experiments with robust traveling salesman instances. The first column indicates the number of nodes. The second column presents the time windows width parameter and the third column shows the budget for the uncertainty set. Columns 4 to 9 present for each algorithm the average computational time in seconds calculated among the 5 instances of the same configuration, the number of instances solved within the time limit, and the average optimality gap.
\begin{table}[h]
\footnotesize
\caption{Comparing \texttt{IP} and \texttt{DD-RO} on a set of robust traveling salesman problem instances}
\label{tab:TSP}
\centering
\begin{tabular}{rrrrrrrrr}
\hline
    {\bf } &     {\bf } &     {\bf } &        \multicolumn{ 3}{c}{{\bf \texttt{IP}}} &     \multicolumn{ 3}{c}{{\bf \texttt{DD-RO}}} \\

   {\bf n} &    {\bf w} & {\bf Budget} & {\bf Time} & {\bf \# Solved} &  {\bf Gap} & {\bf Time} & {\bf \# Solved} &  {\bf Gap} \\
\hline
           &            &          4 &        0.4 &          5 &       - &        0.1 &          5 &       - \\

           &            &          6 &          1 &          5 &       - &        0.1 &          5 &       - \\

        40 &         20 &          8 &          1 &          5 &       - &        0.1 &          5 &       - \\

           &            &         10 &          1 &          5 &       - &        0.1 &          5 &       - \\

           &            &          4 &          1 &          5 &       - &        0.2 &          5 &       - \\

           &            &          6 &          4 &          5 &       - &        0.2 &          5 &       - \\

        40 &         40 &          8 &          7 &          5 &       - &        0.2 &          5 &       - \\

           &            &         10 &         17 &          5 &       - &        0.4 &          5 &       - \\

           &            &          4 &         16 &          5 &       - &          6 &          5 &       - \\

           &            &          6 &        223 &          5 &       - &          8 &          5 &       - \\

        40 &         60 &          8 &        302 &          5 &       - &         10 &          5 &       - \\

           &            &         10 &        417 &          5 &       - &         17 &          5 &       - \\

           &            &          4 &        420 &          5 &       - &         35 &          5 &       - \\

           &            &          6 &       1249 &          4 &       20\% &         45 &          5 &       - \\

        40 &         80 &          8 &       1293 &          4 &       20\% &         49 &          5 &       - \\

           &            &         10 &       1223 &          4 &       20\% &         68 &          5 &       - \\

           &            &          4 &          2 &          5 &       - &        0.2 &          5 &       - \\

           &            &          6 &          6 &          5 &       - &        0.3 &          5 &       - \\

        60 &         20 &          8 &         11 &          5 &       - &        0.3 &          5 &       - \\

           &            &         10 &         31 &          5 &       - &        0.4 &          5 &       - \\

           &            &          4 &        751 &          4 &       20\% &          1 &          5 &       - \\

           &            &          6 &        768 &          4 &       20\% &          2 &          5 &       - \\

        60 &         40 &          8 &        777 &          4 &       20\% &          2 &          5 &       - \\

           &            &         10 &       1104 &          4 &       20\% &          3 &          5 &       -\\

           &            &          4 &        145 &          5 &       - &         27 &          5 &       - \\

           &            &          6 &        682 &          5 &       - &         42 &          5 &       - \\

        60 &         60 &          8 &        466 &          5 &       - &         60 &          5 &       - \\

           &            &         10 &       1563 &          3 &       40\% &         78 &          5 &       -\\

           &            &          4 &       1982 &          3 &       40\% &        266 &          5 &       - \\

           &            &          6 &       2995 &          2 &       60\% &        327 &          5 &       - \\

        60 &         80 &          8 &       3134 &          1 &       80\% &        512 &          5 &       - \\

           &            &         10 &       3600 &          0 &       100\% &        898 &          4 &       20\% \\
\hline
           &            &    Average &        725 &        137 &       38\% &         77 &        159 &       20\% \\
\hline
\end{tabular}  
\end{table}

\end{APPENDICES}

\end{document}